\numberwithin{equation}{section}
\newcommand{\cD}{\mathcal{D}}
\newcommand{\cS}{\mathcal{S}}
\newcommand{\bN}{\mathbb{N}}
\newcommand{\bR}{\mathbb{R}}
\newcommand{\bZ}{\mathbb{Z}}
\newcommand\subsetsim{\mathrel{%
\ooalign{\raise0.2ex\hbox{$\subset$}\cr\hidewidth\raise-0.8ex\hbox{\scalebox{0.9}{$\sim$}}\hidewidth\cr}}}
\DeclareMathOperator{\pr}{pr}
\DeclareMathOperator{\size}{size}
\DeclareMathOperator{\rank}{rank}
\DeclareMathOperator{\tors}{tors}
\newcommand{\signum}{\operatorname{sign}}
\newcommand{\eval}{\operatorname{ev}}
\newcommand{\str}{\operatorname{str}}
\newcommand{\norm}[1]{{\left\lVert #1\right\rVert}_\bZ}
\newcommand{\ornorm}[1]{{\left\lVert #1\right\rVert}^\mathrm{or}_\bZ}
\newcommand{\normo}[1]{\left\lVert #1\right\rVert}
\newtheorem{theorem}{Theorem}[section]
\newtheorem{corollary}[theorem]{Corollary}
\newtheorem{lemma}[theorem]{Lemma}
\theoremstyle{definition}
\newtheorem{definition}[theorem]{Definition}
\newtheorem{remark}[theorem]{Remark}
\newtheorem{example}[theorem]{Example}
\patchcmd{\subsection}{-.5em}{.5em}{}{}
\patchcmd{\subsubsection}{-.5em}{.5em}{}{}
\title[Counting maximally broken trajectories]{Counting maximally broken Morse trajectories on aspherical manifolds}
\author{Caterina Campagnolo}
\email{caterina.campagnolo@kit.edu}
\address{Institute for Algebra and Geometry, Karlsruhe Institute of Technology}
\author{Roman Sauer}
\email{roman.sauer@kit.edu}
\address{Institute for Algebra and Geometry, Karlsruhe Institute of Technology}
\thanks{The authors acknowledge funding by the Deutsche Forschungsgemeinschaft (DFG, German Research Foundation) – 281869850 (RTG 2229).}
\subjclass[2010]{Primary 57R99; Secondary 55N10}
\keywords{Morse-Smale function, torsion homology, broken Morse trajectories, simplicial norm}
\begin{document}
\begin{abstract}
We prove a lower bound on the number of maximally broken trajectories of the negative gradient flow of a Morse-Smale function on a closed aspherical manifold in terms of integral (torsion) homology. 
\end{abstract}

\maketitle

\section{Introduction}

A fundamental relationship between a Morse function $f\colon M\to \bR$ on a 
closed smooth manifold $M$ and its homology are the Morse inequalities. They relate 
the number $\nu_p(f)$ of critical points of index $p$ to the Betti numbers $b_p(M)$ of $M$ by the inequalities 
\[ \sum_{p=0}^n(-1)^{p+n}\nu_p(f)\ge \sum_{p=0}^n(-1)^{p+n}b_p(M)\]
for every $n\le \dim(M)$. 

The Morse inequalities follow by easy homological algebra on the cellular chain complex of a CW-complex $X$ that is homotopy equivalent to $M$ and has 
$\nu_p(f)$ many $p$-cells. Such a CW-complex always exists. If $f$ is a \emph{Morse-Smale function}, i.e.~a Morse function satisfying the Morse-Smale condition with respect to a Riemannian metric on $M$, then one can directly associate a chain complex with~$f$, the \emph{Morse-Smale-Witten complex} which computes the homology of~$M$. 

The goal of this paper is to describe the following relation between a Morse function~$f$ on a closed smooth aspherical manifold and its homology which does not seem obtainable from the homological algebra of the Morse-Smale-Witten complex alone (see Remark~\ref{rem: not from Witten complex}).

\begin{theorem}\label{thm: main theorem}
Let $M$ be a $d$-dimensional closed orientable aspherical Riemannian manifold. Let $f\colon M \to\bR$ be a Morse-Smale function. Then the number of maximally broken trajectories of $f$ is at least 
\[\frac{1}{2^{d+1}(d+1)(d+1)!}\cdot\sum_{p=0}^d \size\bigl( H_p(M;\bZ)\bigr).\]
\end{theorem}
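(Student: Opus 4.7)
The plan is to build a $\Delta$-complex model $Y$ of $M$ from the Morse-Smale data whose top-dimensional simplex count is controlled by the number $N$ of maximally broken trajectories, and then to apply a bound relating integral simplicial complexity to torsion homology on closed aspherical manifolds. Schematically, the proof combines two inequalities,
\[
N \;\geq\; c_d\cdot \|M\|_\bZ \qquad\text{and}\qquad \|M\|_\bZ \;\geq\; c'_d\cdot\sum_{p=0}^d\size\bigl(H_p(M;\bZ)\bigr),
\]
with constants whose product equals $1/(2^{d+1}(d+1)(d+1)!)$.

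For the first inequality, recall that the Morse-Smale CW decomposition of $M$ has one open $p$-cell $W^u(x)$ per index-$p$ critical point $x$. The attaching $(p-1)$-sphere of the cell at $x$ is stratified by broken trajectories emanating from $x$: its top $(p-1)$-stratum is a disjoint union of $\sum_{y:\,\mathrm{ind}(y)=p-1}\#\mathcal{M}(x,y)$ open disks, one per unbroken trajectory to an index-$(p-1)$ critical point. I would iterate this stratification all the way down to the index-$0$ stratum and subdivide barycentrically, producing a $\Delta$-complex $Y$ whose top simplices are indexed (up to a combinatorial factor bounded by $(d+1)!$) by maximally broken trajectories of $f$, together with a degree-one map $\psi\colon Y\to M$. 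The pushforward $\psi_*[Y]$ represents $[M]$ and its $\ell^1$-norm is bounded by the number of top simplices, giving $\|M\|_\bZ\le\mathrm{const}_d\cdot N$. For the second inequality, since $M$ is closed aspherical, I would invoke a bound of Sauer-type that controls the sizes of the integral homology groups in terms of the integral simplicial volume; such bounds are obtained by representing $[M]$ by an efficient integral cycle and reading off the orders of $H_p(M;\bZ)$ from Smith normal forms of the simplicial boundary matrices, whose sizes are dictated by the number of simplices of the representing $\Delta$-complex.

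The main obstacle is the first step: converting the non-regular Morse CW decomposition, whose attaching maps have degrees rather than combinatorial structure, into an honest $\Delta$-complex with an explicit linear bound on the number of $d$-simplices in terms of $N$. The inductive refinement has to be performed coherently across all dimensions so that the triangulations of attaching spheres match between adjacent dimensions, and the combinatorial cost accumulated at each dimension must fit within the constant $2^{d+1}(d+1)(d+1)!$ of the theorem. That the refinement naturally terminates at \emph{maximally} broken trajectories (and not merely partially broken ones) corresponds precisely to insisting that every attaching $(p-1)$-sphere is subdivided all the way down to the index-$0$ stratum; any coarser subdivision would produce uncontrolled cells that no longer match a simple trajectory count.
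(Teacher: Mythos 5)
Your second inequality is exactly Theorem~\ref{thm: homology bounds} (Sauer's bound on $\rank$ and $\log|\tors|$ of $H_p(M;\bZ)$ in terms of $\norm{M}$), and the paper uses it in the same role. The gap is in your first inequality, $\norm{M}\le c_d\cdot N$, and more specifically in the strategy you propose for proving it.

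The direct-triangulation strategy cannot work as stated because your chain of reasoning never uses that $M$ is aspherical, and the inequality $\norm{M}\le c_d\cdot N$ is false for general closed manifolds. For $M=S^d$ with the height function ($d\ge 2$) there are only critical points of index $0$ and $d$, so there are no maximally broken trajectories and $N=0$; but $\norm{S^d}\ge 1$, so any inequality of the form $\norm{M}\le c_d N$ fails. The concrete obstruction you already flag --- that the Morse CW decomposition is not regular and its attaching maps need not admit triangulations with few simplices --- is not a technicality to be pushed through: the number of top simplices in any $\Delta$-complex of the kind you describe cannot be bounded by the count of maximally broken trajectories alone.

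What the paper does instead is not to make the representing cycle small, but to make most of its simplices \emph{discardable}. The key objects are the oriented chain complex $C_\ast^o$ (Section~\ref{norms on integral chains}) with its oriented norm $\ornorm{\cdot}$, and the integral reduction lemma (Lemma~\ref{General ARL}), which uses a straightening operator on an aspherical target $Z$ to show that, after pushing a suitably colored cycle forward to $Z$, all \emph{non-essential} simplices vanish modulo $O_\ast$. Combined with the cellular/order/internality/loop conditions and Alpert's Lemma~\ref{count n-broken traj}, this gives $\ornorm{\alpha_\ast[M]}\le N$ for any map $\alpha\colon M\to Z$ to an aspherical space. Taking $Z=M$ (here asphericality of $M$ is used) and converting oriented norm to integral norm via Theorem~\ref{thm: comparision of norms}, one gets $\norm{M}\le (d+1)!\cdot N$. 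Thus the factor $(d+1)!$ in the constant does not come from a barycentric subdivision of a small triangulation, as in your sketch, but from the chain-homotopy inverse $\phi\colon C_\ast^o\to C_\ast$ of the projection, and asphericality is essential, entering through the straightening operator rather than being a standing hypothesis you can drop.
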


Let us recall and define the notions appearing in the statement. 

If $A$ is an abelian group we define its \emph{size} $\size(A)\in [0,\infty]$ by 

\[\size(A)=\rank(A)+\log |\tors A|,\]
the sum of its rank and the logarithm of the cardinality of its torsion subgroup. 

An \emph{$n$-part broken trajectory} of $f$ is a sequence of critical points $x_n, \dots, x_0$ where $i$ is the index of $x_i$, and a sequence of (unparametrized) flow lines $\gamma_n,\dots,\gamma_1$ of $-\nabla f$ where each $\gamma_i$ runs from $x_i$ to $x_{i-1}$. The number of (unparametrized) flow lines between critical points of index $i$ and $i-1$ is always finite. See~\cite{morse-book} as a background reference for the relevant Morse theory. If $M$ is $d$-dimensional, then a $d$-part broken trajectory is also called a \emph{maximally broken trajectory}. 

The theorem above is inspired by two theorems. 
The first one by H.~Alpert involves the simplicial volume $\normo{M}$ of a closed manifold $M$. 
The simplicial volume is the infimum over the $\ell^1$-norms of real cycles representing the fundamental class. 

\begin{theorem}[Alpert~\cite{alpert}*{Theorem~18}]\label{thm: alpert theorem}
Let $M$ be a closed oriented aspherical Riemannian manifold. Let $f\colon M \to\bR$ be a Morse-Smale function. Then the number of maximally broken trajectories of $f$ is at least $ \normo{M}$, where $\normo{M}$ denotes the simplicial volume of $M$. 
\end{theorem}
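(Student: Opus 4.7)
The plan is to construct, directly from the Morse-Smale data, a real singular cycle representing the fundamental class $[M] \in H_d(M;\bR)$ whose $\ell^1$-norm is at most the number of maximally broken trajectories. Since $\normo{M}$ is by definition the infimum of $\ell^1$-norms of real cycles representing $[M]$, exhibiting any such cycle immediately yields the inequality. Asphericity enters through Gromov's equivalence theorem, which identifies $\normo{M}$ with the $\ell^1$-seminorm of the image of $[M]$ in $H_d(B\pi_1(M);\bR)$; this is what makes it natural to carry out the construction after lifting to the universal cover $\widetilde M$ and straightening.

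The first step is to extract the combinatorial flow category from $f$: the set of critical points is partially ordered by the Morse-Smale reachability relation, and for each covering pair (adjacent indices) there is a finite set of unparametrized flow lines. A maximal chain $x_d > x_{d-1} > \dots > x_0$ in this poset, together with a choice $\gamma_i$ of flow line on each step, is by definition a maximally broken trajectory $\tau$. The unstable manifolds $W^u(x_p)$ give a partition of $M$ into open cells of dimension $p=\mathrm{ind}(x_p)$, and the Morse-Smale transversality controls how cell closures decompose into broken trajectories.

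The second step is to associate to each $\tau$ a singular simplex $\sigma_\tau\colon\Delta^d\to M$ whose $i$-th vertex is $x_i$ and whose $i$-th edge traces $\gamma_i$. A clean realization is to lift the sequence of flow arcs coherently to $\widetilde M$, so that the critical points are placed at $d+1$ distinct points of $\widetilde M$ in increasing $f$-order; because $\widetilde M$ is contractible one may then take the straight simplex on these lifted vertices (for a fixed $\pi_1(M)$-equivariant "straightening" such as geodesic straightening in a chosen metric) and project back to $M$. Summing with signs $\epsilon_\tau\in\{\pm 1\}$ dictated by the orientations of the top-index descending manifolds yields a chain $c_f=\sum_\tau \epsilon_\tau\sigma_\tau$ whose $\ell^1$-norm equals the number of maximally broken trajectories.

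The main obstacle is verifying that $c_f$ is a cycle and that $[c_f]=[M]$. The cycle property reduces to the combinatorial cancellation of boundary faces of the order complex of the critical-point poset: a codimension-one face of $\sigma_\tau$ corresponds to deleting one intermediate critical point $x_i$, and the standard gluing principle for broken trajectories pairs such faces in sign-reversing duos (two broken trajectories differing only at the $i$-th step sharing the same contracted face). To see $[c_f]=[M]$, one evaluates the cycle at a point $p\in M$ which is regular for $f$ and generic with respect to all unstable manifolds; such a $p$ lies in a unique $W^u(x_d)$, and tracing the downward flow shows that $p$ is covered by precisely one $\sigma_\tau$ with the correct local orientation. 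Once these two facts are established, the bound $\normo{M}\le\#\{\text{maximally broken trajectories}\}$ follows from the definition of the simplicial volume as recalled above.
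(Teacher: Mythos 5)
Your approach is genuinely different from the paper's: you try to \emph{construct} an explicit fundamental cycle $c_f$ from the broken-trajectory data and then invoke the definition of simplicial volume as an infimum, whereas Alpert (and the paper) work the other way, taking an arbitrary cycle, pushing it through a straightening operator adapted to the CW-structure on $M$ given by descending manifolds, and showing via the amenable/integral reduction lemma (Lemma~\ref{General ARL}) that only the ``essential'' simplices survive, the count of which is controlled by the maximally broken trajectories (Lemma~\ref{count n-broken traj}). If your construction worked it would be a pleasingly direct argument, but as written it has two real gaps.

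First, the degree computation $[c_f]=[M]$ is not valid. Each $\sigma_\tau$ is the projection of a \emph{straight} simplex on lifted vertices in $\widetilde M$; its image in $M$ is determined entirely by the straightening (e.g.\ geodesic) convention and has no geometric relationship with the gradient flow or the descending manifolds. ``Tracing the downward flow'' from a generic point $p$ to identify a unique covering $\sigma_\tau$ therefore does not make sense: the point $p$ may lie in the image of many, one, or none of the straightened simplices, and the local orientation at $p$ is unrelated to the orientation of $W^u(x_d)$. You are implicitly reasoning as if the $\sigma_\tau$ were tiles cut out of the descending manifolds, but that is not what the construction produces. A genuine proof that $[c_f]=[M]$ would require a different argument (for instance a homotopy from $c_f$ to a known fundamental cycle), and filling that in is a substantial task.

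Second, even the cycle property $\partial c_f=0$ requires more care than ``the standard gluing principle pairs faces in sign-reversing duos.'' The face of $\sigma_\tau$ obtained by deleting the vertex $x_i$ is determined by the lifts $\tilde x_d,\dots,\widehat{\tilde x_i},\dots,\tilde x_0$, and the lift of $x_{i-1}$ depends on the homotopy class of the concatenation $\gamma_{i+1}*\gamma_i$. Two broken trajectories paired by the boundary of a $1$-dimensional moduli space of flow lines from $x_{i+1}$ to $x_{i-1}$ do give homotopic paths, so the pairing is plausible, but this lift-compatibility and the sign bookkeeping need to be spelled out; as stated it is a heuristic. By contrast the paper sidesteps all of this by never building a cycle of its own: it uses the cellular, order, internality and loop conditions plus the reduction lemma to bound the norm of any representative.
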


With Gromov's and Thurston's computation of simplicial volume of hyperbolic manifolds~\cite{gromov} Alpert concludes: 

\begin{corollary}[\cite{alpert}*{Theorem~1}]\label{cor: Alpert hyperbolic}
	Let $M$ be a closed oriented hyperbolic $d$-manifold. Then the number of maximally broken trajectories of a Morse-Smale function on $M$ is at least $\operatorname{vol}(M)/v_d$, where $v_d$ is the volume of a regular ideal $d$-simplex in hyperbolic $d$-space. 
\end{corollary}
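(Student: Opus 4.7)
The plan is to derive this corollary as a direct combination of the two inputs already assembled in the excerpt: Alpert's lower bound on the number of maximally broken trajectories in terms of the simplicial volume, and the Gromov--Thurston computation of the simplicial volume of a closed hyperbolic manifold. Since closed hyperbolic manifolds are aspherical (being quotients of the contractible space $\H^d$ by torsion-free discrete isometry groups) and orientable by hypothesis, they lie in the class of manifolds to which Theorem~\ref{thm: alpert theorem} applies, so no additional geometric preparation is required.

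First, I would apply Theorem~\ref{thm: alpert theorem} directly to the hyperbolic manifold $M$ equipped with its hyperbolic Riemannian metric and an arbitrary Morse--Smale function $f$. This yields that the number of maximally broken trajectories of $f$ is bounded below by the simplicial volume $\normo{M}$. Second, I would invoke the Gromov--Thurston theorem (proved in Gromov's \cite{gromov} using the straightening procedure together with Thurston's observation that regular ideal simplices realize the maximal volume in $\H^d$), which asserts
\[\normo{M}=\frac{\operatorname{vol}(M)}{v_d}\]
for every closed oriented hyperbolic $d$-manifold $M$. Chaining the two inequalities then gives the desired lower bound $\operatorname{vol}(M)/v_d$ on the number of maximally broken trajectories.

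There is essentially no obstacle at the level of this corollary; the two ingredients fit together without any interaction, and the only thing to check is that the hypotheses of Theorem~\ref{thm: alpert theorem} are met by a closed oriented hyperbolic manifold, which follows from asphericity of $\H^d$. The substantive work is hidden in those two cited inputs: the proof of Alpert's theorem (which requires the simplicial-volume estimate for broken trajectories) and the Gromov--Thurston volume-equals-simplicial-volume identification (which requires the straightening of singular simplices and the uniqueness, up to isometry, of the volume-maximizing simplex in hyperbolic geometry). Neither of these needs to be reproved here.
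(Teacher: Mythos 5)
Your proposal matches the paper's own derivation exactly: the corollary is obtained by combining Theorem~\ref{thm: alpert theorem} with the Gromov--Thurston identity $\normo{M}=\operatorname{vol}(M)/v_d$ for closed oriented hyperbolic $d$-manifolds, noting that such manifolds are aspherical so Alpert's theorem applies. Nothing further is needed.
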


The second theorem is a statement about the integral simplicial volume $\norm{M}$ of a 
closed manifold $M$. The integral simplicial volume is the infimum over the $\ell^1$-norms of integral cycles representing the fundamental class. 

That $\norm{M}$ bounds the Betti numbers in all degrees was first observed by 
Gromov (see~\cite{luck}*{Example~14.28} for a first detailed proof). In~\cite{frigerioetal}*{Lemma~4.1} the improved rank bound below was proved. That $\norm{M}$ also bounds the torsion homology as below was proved by the second author in~\cite{sauer}*{Theorem~3.2}.

\begin{theorem}\label{thm: homology bounds}
Let $M$ be a closed oriented $d$-dimensional manifold. Then the following estimates hold for every $p\in\{0,\dots, d\}$:
\begin{enumerate}
\item $\rank H_p(M;\bZ)\le \norm{M}$,
\item $\log|\tors(H_p(M;\bZ))|\le \log(d+1)\binom{d+1}{p+1}\norm{M}$. 
\end{enumerate}
\end{theorem}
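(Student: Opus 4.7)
The plan is to extract both bounds from a single integral fundamental cycle $z = \sum_{i=1}^k n_i \sigma_i$ realizing $\sum_i |n_i| = \norm{M}$, by exploiting the cap product with $z$ at the chain level.

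For part (1), use $\rank H_p(M;\bZ) = b_p(M)$ together with real Poincar\'e duality $\cap [M] \colon H^{d-p}(M;\bR) \xrightarrow{\cong} H_p(M;\bR)$. Since $[z]=[M]$, the chain-level map $\cap z$ on cocycles represents $\cap[M]$ on cohomology, and the explicit formula
\[ \phi \cap z \;=\; \sum_{i=1}^k n_i \, \phi\bigl(\sigma_i|_{[v_0,\dots,v_{d-p}]}\bigr) \cdot \sigma_i|_{[v_{d-p},\dots,v_d]} \]
shows that its image lies in the $\bR$-span of the $k \le \norm{M}$ back $p$-faces $\sigma_i|_{[v_{d-p},\dots,v_d]}$. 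Hence $b_p(M) \le \norm{M}$, proving (1).

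For part (2), I would run the integer analogue of the same argument and couple it with a determinantal torsion bound on a finite subcomplex. Let $B_* \subseteq C_*(M;\bZ)$ be the integer subcomplex generated by the $\sigma_i$ and all their iterated faces; it is free abelian with $\rank B_p \le \binom{d+1}{p+1}\norm{M}$. The cap product $\phi \mapsto \phi \cap z$ defines a well-defined map
\[ \Psi \colon H^{d-p}(M;\bZ) \longrightarrow H_p(B_*;\bZ), \]
satisfying $i_* \circ \Psi = \cap[M]$, where $i\colon B_* \hookrightarrow C_*(M;\bZ)$. Well-definedness follows because for any $(d-p-1)$-cochain $\psi$ the chain $\psi \cap z$ already lies in $B_*$, so coboundaries of $\phi$ are sent to boundaries inside $B_*$. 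Since integral Poincar\'e duality makes $\cap[M]$ an isomorphism, $\Psi$ is injective; combined with the duality isomorphism $\tors H^{d-p}(M;\bZ) \cong \tors H_p(M;\bZ)$, this gives
\[ |\tors H_p(M;\bZ)| \le |\tors H_p(B_*;\bZ)|. \]

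It then remains to bound $|\tors H_p(B_*;\bZ)|$ by a product of elementary divisors of the boundary operator $\partial \colon B_{p+1} \to B_p$. Its matrix has entries in $\{-1,0,+1\}$ with at most $d+2$ nonzero entries per column, and the relevant ranks are bounded by $\binom{d+1}{p+1}\norm{M}$. A Hadamard-type estimate on sub-determinants then delivers $|\tors H_p(B_*;\bZ)| \le (d+1)^{\binom{d+1}{p+1}\norm{M}}$, whose logarithm is precisely the bound in (2). The main obstacle is carrying the cap-product argument through at the integer level — in particular, verifying the injectivity of $\Psi$ and matching the combinatorial constants in the Hadamard step with those appearing in the theorem; everything else is routine homological algebra.
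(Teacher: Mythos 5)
Your proposal is correct and follows essentially the same strategy as the proofs in the sources the paper cites for this theorem (\cite{frigerioetal}*{Lemma~4.1} for~(1) and \cite{sauer}*{Theorem~3.2} for~(2)): cap the integral cohomology with a minimal fundamental cycle $z=\sum n_i\sigma_i$, note that the result lands in the finite free subcomplex $B_*$ spanned by the iterated faces of the $\sigma_i$, use Poincar\'e duality to get injectivity of the induced map on $H^{d-p}$, and finish with a Hadamard bound on the minors of $\partial_{p+1}\colon B_{p+1}\to B_p$. One small correction: the entries of the boundary matrix need not lie in $\{-1,0,1\}$, since distinct faces of a singular $(p+1)$-simplex may coincide as singular $p$-simplices; however, each column has $\ell^1$-norm (hence $\ell^2$-norm) at most $p+2\le d+1$ rather than the $d+2$ you wrote, and this is precisely what makes the Hadamard estimate give the constant $\log(d+1)$ appearing in the statement.
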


We follow Alpert's methods closely. In fact, this paper grew out of a seminar in which we read Alpert's paper and realized that her proof works integrally and can be combined with Theorem~\ref{thm: homology bounds} to show the lower (torsion) homology bound for the number of maximally broken trajectories.

In the next section we discuss some examples where  Theorem~\ref{thm: main theorem} but not Theorem~\ref{thm: alpert theorem} can be applied. Section \ref{norms on integral chains} is devoted to the comparison between the norms on the singular chain complex and the so-called oriented chain complex of a topological space. Section \ref{I orl} introduces the key lemma of the method in the version adapted to our case, namely the integral reduction lemma (Lemma \ref{General ARL}). Section \ref{norms morse-smale} deals with properties of norms related to cellular decompositions of manifolds coming from Morse-Smale functions. The proof of Theorem~\ref{thm: main theorem} is concluded in Section~\ref{sec: conclusion of proof}.

We would like to thank our former colleague Federico Franceschini with whom we started working on Alpert's methods. We also thank the anonymous referee for her or his useful remarks.

\section{Examples}

We discuss three geometric examples and applications. Each example exhibits a sequence $(M_n)_n$ of closed aspherical $3$-manifolds such that for any choice of Morse-Smale function $f_n$ on $M_n$ the number of maximally broken trajectories of $f_n$ tends to infinity. The first example is taken from Alpert's paper and can be concluded from Corollary~\ref{cor: Alpert hyperbolic}. The other two examples follow from Theorem~\ref{thm: main theorem}. \begin{example}[\cite{alpert}*{Proposition 2}]
Let $\phi\colon \Sigma\to\Sigma$ be a pseudo-Anosov diffeomorphism of a closed surface $\Sigma$ of genus at least two. 
Then the mapping tori $M_n=T(\phi^n)$ are closed hyperbolic $3$-manifolds that admit a Morse-Smale function 
$f_n\colon T(\phi^n)\to\bR$ such that the number of critical points of $f_n$ is uniformly bounded;
but $\operatorname{vol}(T(\phi^n))\to\infty$ and hence the number of maximally broken trajectories of $f_n$ tends to $\infty$. 
\end{example}

\begin{example}
Let $K$ be the figure eight knot in $S^3$. The knot complement $M=S^3-K$ is a complete hyperbolic $3$-manifold of finite volume. By various Dehn fillings on the torus boundary of $M$ one obtains a sequence of closed hyperbolic $3$-manifolds $M_n$ with $|\tors(H_1(M_n;\bZ))|\ge n$ and uniformly bounded volume (see~\cite{BGS}*{Theorem~1.7}). In fact, by Thurston's hyperbolic Dehn filling theorem the volume of $M_n$ is at most the volume of $M$. By Theorem~\ref{thm: main theorem} the number of maximally broken trajectories of $f_n$ for any sequence of Morse-Smale functions $f_n\colon M_n\to\bR$ grows at least like $\log(n)$. 
\end{example}

\begin{example}
For every $n\in\bN$ consider the group 
\[\Gamma_n=\langle x,y,z\mid [x,z]=[y,z]=1,~[x,y]=z^n\rangle.\]
One verifies that $\Gamma_n$ is a torsion-free nilpotent group: 
This can be seen from the fact that $\Gamma_n$ is just the variant of the integral $3$-dimensional Heisenberg group where the $(1,2)$-matrix entry is in $n\bZ$. 
 Let $M_n=\bR^3/\Gamma_n$ be the associated nil-manifold. In particular, each $M_n$ is a $3$-dimensional closed aspherical smooth $3$-manifold. 
The first homology group of $M_n$ equals the abelianization 
\[ \Gamma_n/[\Gamma_n,\Gamma_n]\cong \bZ^2\oplus \bZ/n.\]
In particular, the Betti numbers of each $M_n$ are $b_0=1, b_1=2, b_2=2, b_3=1$ by Poincar\'{e} duality and thus uniformly bounded. 
Fix a Riemannian metric on each $M_n$ and choose Morse-Smale functions $f_n\colon M_n\to\bR$ with respect to these metrics. The number of maximally broken trajectories of $f_n$ grows at least like 
$\log(n)$ by Theorem~\ref{thm: main theorem}. Note also that the simplicial volume of each $M_n$ vanishes since its fundamental group $\Gamma_n$ is amenable~\cite{gromov}. 
\end{example}

Finally, we would like to discuss why Theorem~\ref{thm: main theorem} is not an obvious  consequence of the Morse-Smale-Witten complex alone, as was pointed out before the statement of 
Theorem~\ref{thm: main theorem}. 

\begin{remark}\label{rem: not from Witten complex}
Since the Morse-Smale-Witten complex $W_\ast(M)$ associated to a Morse-Smale function $f\colon M\to \bR$ computes the singular homology of $M$ it is tempting to extract the 
bound in Theorem~\ref{thm: main theorem} from it by homological algebra. Each chain group $W_i(M)$ is a free abelian group whose rank is the number of critical points of index $i$. Let $p$ and $q$ be critical points of index $i$ resp.~$i-1$. The $(p,q)$-matrix entry of the differential $W_{i}(M)\to W_{i-1}(M)$ is the signed number of flow lines from $p$ to $q$. If we could bound all these numbers we would obtain bounds on the norm of the differentials and hence bounds on the (torsion) homology (cf.~\cite{sauer}*{Lemma~3.1}). But it could be a priori possible that the number of maximally broken trajectories is small but there are still two critical points $p,q$ as above with a large number of flow lines from $p$ to $q$. This prevents us from deducing Theorem~\ref{thm: main theorem} directly from the Morse-Smale-Witten complex. 

Other consequences of Theorem~\ref{thm: main theorem} are more elementary and can be deduced more directly. It follows from Theorem~\ref{thm: main theorem}, for example, that 
on a closed aspherical smooth manifold any Morse-Smale function has a critical point of index $p$ for every $p\in\{0,\dots, \dim(M)\}$. This, however, can be shown directly by basic homotopy theory: 

It suffices to show that any CW-structure on a closed aspherical manifold $M$ has cells in every degree $\le \dim M$. 
Let $X$ be a CW-structure on $M$. Suppose for $i<\dim X=\dim M$ we have $X^{(i)}=X^{(i+1)}$, i.e.~there are no $(i+1)$-cells. We can suppose that $i>0$; otherwise 
we had trivial $\pi_1(M)$ and $M$ were of dimension $0$. We can also suppose that $i>1$; if $i=1$ then $\pi_1(M)$ would be free. If it is free abelian, $M=S^1$ and the claim is verified; if not we have a contradiction, as there is no closed aspherical manifold with free nonabelian fundamental group since the homology of such a group does not satisfy Poincar\'e duality. Hence $i\ge 2$ and the inclusion $X^{(i)}\to X$ is a $\pi_1$-isomorphism. Let $p\colon \tilde X\to X$ be the universal covering. Note that $\tilde X^{(i)}=p^{-1}(X^{(i)})$ is the universal covering of $X^{(i)}$. Since $\tilde X^{(i)}=\tilde X^{(i+1)}\to \tilde X $ is $(i+1)$-connected the homology satisfies 
\[ H_\ell\bigl(\tilde X^{(i)}\bigr)=0~\text{ for all $0<\ell\le i$.}\] 
On the other hand, $H_\ell(\tilde X^{(i)})=0$ for $\ell>i$ since $\tilde X^{(i)}$ is $i$-dimensional. By Whitehead's theorem $\tilde X^{(i)}$ is contractible and  thus $X^{(i)}$ is aspherical with fundamental group $\pi_1(X)=\pi_1(M)$. Hence the inclusion $X^{(i)}\to X$ is a homotopy equivalence. Since the $\bZ/2$-homology of $X$ does not vanish in degree $\dim M$ and $i\leq \dim M$, this implies that $i=\dim M$. 	
\end{remark}

\section{Norms on integral chains}\label{norms on integral chains}

The integral singular chain complex of a topological space $X$ is denoted by $C_\ast(X)$. The \emph{integral norm} $\norm{c}$ of a chain $c=\sum_{\sigma}a_\sigma\sigma\in C_p(X)$, where $\sigma$ runs over the singular $p$-simplices of $X$, is defined as 
\[ \norm{c}=\sum_{\sigma} |a_\sigma|.\]
The symmetric group $S(p+1)$ in $p+1$ letters acts on the set $\cS_p(X)$ of singular $p$-simplices of $X$ through the affine-linear extension of the permutation action on the vertices. Let $O_p(X)\subset C_p(X)$ be the submodule generated by the union of  
\[ \bigl\{ g\sigma-\signum(g)\sigma\mid \sigma\in \cS_p(X), g\in S(p+1)\bigr\}\]
and 
\[   \bigl\{ \sigma\in\cS_p(X)\mid \text{there is a transposition $t$ with $t\sigma=\sigma$}\bigr\}.
\]
It is straightforward to verify that $O_\ast(X)$ is a subcomplex~\cite{barr-paper}*{Proposition~2.2}. We denote by $C_\ast^o(X)$ the quotient complex $C_\ast(X)/O_\ast(X)$. Obviously, $C_\ast^o$ provides a functor from topological spaces to chain complexes of abelian groups. The natural projection $\pr_\ast\colon C_\ast\to C_\ast^o$ is a natural chain homomorphism. 
For an element $c\in C_p^o(X)$ we let the \emph{integral oriented norm} 
$\ornorm{c}$ be defined as 
\[ \ornorm{c}=\min\bigl\{ \norm{c'}\mid \pr_p(c')=c\bigr\}.\]

Further, both the integral and the integral oriented norm induce functions 
on $H_p(C_\ast(X))=H_p(X)$ and $H_p(C_\ast^o(X))$, respectively, by taking the minimum among representing cycles; we also call these functions \emph{integral norm} and \emph{integral oriented norm}, respectively, and denote them by the same symbols as their analogues on the chain complexes. 
\begin{definition}\label{def: integral simplicial volume}
The \emph{integral simplicial volume} of a closed oriented manifold $M$ is the integral norm of its fundamental class. It is denoted by $\norm{M}$. 
\end{definition}

The oriented singular chain complex $C_\ast^o$ historically predates the singular chain complex $C_\ast$ and was introduced in 1944 by Eilenberg. A comparison between the two should have been discussed a long time ago but was only addressed in writing in a 1995 paper by Barr where the following 
result is proved~\cite{barr-paper}*{Theorem~1.1}\footnote{As the MathSciNet reviewer of~\cite{barr-paper} puts it: \emph{This paper discusses a strange historical lacuna}.}.  

\begin{theorem}\label{thm: comparision non-oriented and oriented homology}
The projection $\pr_\ast$ induces an isomorphism in homology. 
\end{theorem}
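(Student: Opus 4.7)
The plan is to use the method of acyclic models, with canonical models $\{\Delta^n\}_{n\ge 0}$. The functor $C_\ast$ is classically free on these models since $C_p(X)$ is free abelian on $\mathrm{Hom}_\spaces(\Delta^p,X)$.

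The first step is to verify that $C_\ast^o$ is acyclic on the models, meaning $H_p(C_\ast^o(\Delta^n))=\bZ$ for $p=0$ and vanishes otherwise. For a fixed vertex $v$, the classical cone operator $\mathrm{cone}_v\colon C_p(\Delta^n)\to C_{p+1}(\Delta^n)$ gives a chain contraction of $C_\ast(\Delta^n)$ onto $\bZ\cdot v$. I would check that $\mathrm{cone}_v$ descends to $C_\ast^o$. Placing the apex as the last vertex of a $(p+1)$-simplex, the embedding $S(p+1)\hookrightarrow S(p+2)$ as the stabilizer of the last position satisfies $\mathrm{cone}_v(g\sigma)=g\cdot\mathrm{cone}_v(\sigma)$ for $g\in S(p+1)$ and preserves the sign, so $\mathrm{cone}_v$ sends each generator $g\sigma-\signum(g)\sigma$ to a generator of $O_{p+1}$. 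Likewise, if a transposition $t\in S(p+1)$ fixes $\sigma$, then its image in $S(p+2)$ fixes $\mathrm{cone}_v(\sigma)$. Finally the constant simplex appearing in the chain-homotopy identity $\partial\mathrm{cone}_v+\mathrm{cone}_v\partial=\mathrm{id}-\mathrm{const}_v$ lies in $O_\ast$ in positive degrees, so $\mathrm{cone}_v$ witnesses the desired contraction of $C_\ast^o(\Delta^n)$ onto $\bZ\cdot v$.

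Since $\pr_0$ is an isomorphism (because $O_0=0$), $\pr_\ast$ lifts the canonical identification $H_0 C_\ast\cong H_0 C_\ast^o$. The acyclic models theorem, with $C_\ast$ free on models and $C_\ast^o$ acyclic on them, yields that $\pr_\ast$ is the unique such lift up to natural chain homotopy. To upgrade this to the statement that $\pr_\ast$ is a natural quasi-isomorphism, it would suffice to produce a natural chain map $\beta\colon C_\ast^o\to C_\ast$ inducing the inverse on $H_0$: then both $\pr\circ\beta$ and $\beta\circ\pr$ lift identities and by uniqueness are naturally chain-homotopic to the identities. Equivalently, it suffices to show that the subcomplex $O_\ast(X)=\ker\pr_\ast$ is naturally chain contractible for every $X$.

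The main obstacle is the construction of $\beta$, or equivalently of a natural contraction of $O_\ast$. Unlike $C_\ast$, the functor $C_\ast^o$ is not strictly free on the models, because its generators carry nontrivial $S(p+1)$-symmetries that obstruct a naive acyclic-models argument; in particular $C_p^o(X)$ can have $\bZ/2$-torsion coming from orbits whose stabilizer contains an odd permutation that is not a transposition. The plan is to realize every generator of $O_p(X)$ as a pushforward of a canonical generator of $O_p(\Delta^p)$ along a singular simplex $\sigma\colon\Delta^p\to X$, and to transport a fixed contraction of $O_\ast(\Delta^p)$---whose acyclicity follows from the long exact sequence of $0\to O_\ast\to C_\ast\to C_\ast^o\to 0$ over the model combined with the first step---along these pushforwards. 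The technical heart of the argument, which I expect to be the main difficulty, is verifying that the resulting operator is well-defined on $O_\ast(X)$ and satisfies the chain-homotopy identity; Barr handles this via a careful choice of canonical model generators and bookkeeping of their symmetries.
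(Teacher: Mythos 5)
Your first step---showing $C_\ast^o(\Delta^n)$ is acyclic via the cone operator and checking that the cone descends through $O_\ast$---is correct and essentially mirrors what one needs. You also correctly diagnose the central difficulty: the classical acyclic models theorem gives a natural chain map $C_\ast\to C_\ast^o$ over $H_0$ (unique up to natural homotopy), but it cannot be run in the other direction because $C_\ast^o$ fails to be free on the models $\{\Delta^n\}$; as you note, $C_p^o(X)$ can even have $2$-torsion when a singular simplex has a stabilizer containing an odd permutation that is not a transposition. This diagnosis is the right one.

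However, the proof stops exactly where the theorem starts. The construction of a natural $\beta\colon C_\ast^o\to C_\ast$ (equivalently, a natural contraction of $O_\ast$) is the whole content of the statement, and your ``plan'' to transport a fixed contraction of $O_\ast(\Delta^p)$ along pushforwards is not a viable route as stated: there is no canonical generator of $O_p(\Delta^p)$, different singular simplices representing the same element of $O_p(X)$ would be transported to different chains, and nothing in the sketch addresses why the result would be independent of these choices or compatible with face maps. You explicitly concede that this is ``the main difficulty'' and defer it to Barr---so what you have written establishes at most that $\pr_\ast$ is \emph{some} natural chain map with the right $H_0$, not that it is a quasi-isomorphism.

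For comparison, the paper follows Barr's cotriple/path-space method rather than Eilenberg--MacLane acyclic models. It replaces the models $\Delta^n$ by the endofunctor $G$ (disjoint union of based path spaces), for which there is a genuinely natural, $S(p+1)$-equivariant chain contraction $s_\ast$ of $C_\ast(G(X))$ and a natural section $\theta_\ast$ of $C_\ast(\eval)$. Because $s_\ast$ and $\theta_\ast$ are equivariant, they descend to $C_\ast^o$, and the homotopy inverse $\phi_\ast\colon C_\ast^o\to C_\ast$ is then built degree by degree by the explicit formula $\phi_{p+1}=C_{p+1}(\eval)\circ s_p\circ\phi_p\circ\partial^o\circ\theta^o_{p+1}$. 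This sidesteps the non-freeness of $C_\ast^o$ entirely (the comonadic resolution plays the role that freeness plays in the Eilenberg--MacLane version) and, crucially for the paper, yields explicit Lipschitz constants at each stage, which is what Theorem~\ref{thm: comparision of norms} needs. Your route, even if completed along Barr's lines, would give the isomorphism but not the quantitative $(p+1)!$ bound.
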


To obtain the next theorem we basically have to reprove Theorem~\ref{thm: comparision non-oriented and oriented homology} with greater care.  

\begin{theorem}\label{thm: comparision of norms}
Let $c$ be a singular $p$-cycle and $[c]$ its homology class. Then 
\[ \ornorm{[\pr_p(c)]}\le \norm{[c]}\le (p+1)!\cdot\ornorm{[\pr_p(c)]}.\]
\end{theorem}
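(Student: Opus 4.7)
The left inequality $\ornorm{[\pr_p(c)]}\le\norm{[c]}$ is immediate: for any singular cycle $c'$ representing $[c]$, the chain $\pr_p(c')$ is an oriented cycle representing $[\pr_p(c)]$, and by the very definition of the oriented norm $\ornorm{\pr_p(c')}\le\norm{c'}$; taking the infimum over $c'$ gives the claim.

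For the right inequality, following the authors' announced strategy of ``reproving Theorem~\ref{thm: comparision non-oriented and oriented homology} with greater care,'' the plan is to revisit the acyclicity of $O_\ast(X)$ with explicit norm control. Pick an oriented cycle $z\in C^o_p(X)$ representing $[\pr_p(c)]$ with $\ornorm{z}$ arbitrarily close to $\ornorm{[\pr_p(c)]}$, and lift $z$ to an integer chain $\tilde z\in C_p(X)$ with $\norm{\tilde z}=\ornorm{z}$. Then $\partial\tilde z$ lies in $O_{p-1}(X)$ (since $\pr_{p-1}(\partial\tilde z)=\partial z=0$) and is automatically a cycle there. The goal is to exhibit $\eta\in O_p(X)$ with $\partial\eta=\partial\tilde z$ and $\norm{\eta}\le\bigl((p+1)!-1\bigr)\norm{\tilde z}$; then $c':=\tilde z-\eta$ is a singular cycle with $\pr_p(c')=z$, $\norm{c'}\le(p+1)!\,\norm{\tilde z}$, and $[c']=[c]$ in $H_p(X)$ by injectivity of $\pr_\ast$ on homology. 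Taking infima over $z$ then yields the inequality.

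The algebraic tool behind the construction of $\eta$ is the antisymmetrization $\mathrm{Alt}_p(\sigma):=\sum_{g\in S(p+1)}\signum(g)\,g\sigma$. A direct vertex bookkeeping computation yields the identity $\partial\,\mathrm{Alt}_p=(p+1)\,\mathrm{Alt}_{p-1}\,\partial$, and $\mathrm{Alt}_{p-1}$ vanishes on $O_{p-1}(X)$ (for degenerate simplices, by pairing $g$ with $gt$ for a stabilizing transposition $t$; for the generators $h\tau-\signum(h)\tau$, by sign-cancellation after reindexing the sum). These two facts together imply that $A:=\mathrm{Alt}_p(\tilde z)$ is a singular cycle and that the chain $\eta_0:=(p+1)!\,\tilde z-A$ lies in $O_p(X)$, satisfies $\partial\eta_0=(p+1)!\,\partial\tilde z$, and has norm at most $2(p+1)!\,\norm{\tilde z}$. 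The needed $\eta$ is then to be produced by ``dividing $\eta_0$ by $(p+1)!$'' at integer coefficients through a careful simplex-by-simplex analysis of the $S(p+1)$-orbit structure of the support of $\tilde z$.

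The main obstacle is precisely this integer division step. The chain $\eta_0$ provides a norm-controlled witness to the relation $(p+1)!\,\partial\tilde z\in\partial O_p$, but converting it into a witness to $\partial\tilde z\in\partial O_p$ at integer coefficients, without exceeding the norm factor $(p+1)!-1$, is the technical heart of the theorem: it requires reworking Barr's acyclicity argument for $O_\ast$ with quantitative bookkeeping, and it is exactly this bookkeeping that forces the factor $(p+1)!$ appearing in the statement.
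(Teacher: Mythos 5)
Your proof of the left inequality is correct and matches the paper's (trivial) observation.

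For the right inequality you take a genuinely different route from the paper, and the route has a real gap. The paper constructs a natural chain map $\phi\colon C_\ast^o(X)\to C_\ast(X)$ that is a chain homotopy inverse to $\pr_\ast$, degree by degree via the path-space functor $G$, Barr's $S(p+1)$-equivariant chain contraction $s_\ast$ of $C_\ast(G(X))$, and the equivariant section $\theta_\ast$ of evaluation, with $\phi_{p+1}=C_{p+1}(\eval)\circ s_p\circ\phi_p\circ\partial^o_{G(X)}\circ\theta^o_{p+1}$. There the factor $(p+1)!$ accumulates from one factor of $p+2$ (the Lipschitz constant of the boundary) per inductive step, not from summing over $S(p+1)$-orbits. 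Once $\phi$ is in hand, $\phi_p(z)$ is directly a singular cycle homologous to $c$ with $\norm{\phi_p(z)}\le(p+1)!\ornorm{z}$, with no need for any division step.

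Your reduction is clean up to a point: choosing $\tilde z$ with $\pr_p(\tilde z)=z$, $\norm{\tilde z}=\ornorm{z}$, noting $\partial\tilde z\in O_{p-1}(X)$, and observing that $c'=\tilde z-\eta$ would work if one had $\eta\in O_p(X)$ with $\partial\eta=\partial\tilde z$ and $\norm{\eta}\le\bigl((p+1)!-1\bigr)\norm{\tilde z}$, is all correct, as are the facts about $\mathrm{Alt}$ (it kills $O_{p-1}$, satisfies $\partial\,\mathrm{Alt}_p=(p+1)\mathrm{Alt}_{p-1}\partial$, and $\eta_0=(p+1)!\tilde z-\mathrm{Alt}_p(\tilde z)\in O_p$ with $\partial\eta_0=(p+1)!\,\partial\tilde z$). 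But the missing step is not a bookkeeping refinement; it is where the argument actually fails. What the antisymmetrization really produces is the singular \emph{cycle} $A=\mathrm{Alt}_p(\tilde z)$ with $\pr_p(A)=(p+1)!\,z$ and $\norm{A}\le(p+1)!\,\ornorm{z}$, i.e., a norm-controlled representative of $(p+1)!\,[c]$, not of $[c]$. In general $\norm{(p+1)!\,[c]}$ can be much smaller than $(p+1)!\,\norm{[c]}$: if $[c]$ is a nonzero torsion class whose order divides $(p+1)!$, then $(p+1)!\,[c]=0$ and your $A$ (equivalently your $\eta_0$) gives no information at all about $\norm{[c]}$, even though the theorem still makes a nontrivial claim in that case. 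There is no reason the sum $\eta_0=-\sum_{g}\signum(g)\bigl(g\tilde z-\signum(g)\tilde z\bigr)$ can be replaced by a single norm-controlled $\eta\in O_p$ with $\partial\eta=\partial\tilde z$ by inspecting $S(p+1)$-orbits; the individual summands have the wrong boundaries, and nothing in the orbit structure forces the needed cancellation. If you want to pursue your route, you would have to supply an explicit norm-controlled chain contraction of the complex $O_\ast$ itself (effective acyclicity of $O_\ast$), which is essentially Barr's Proposition~4.1 reworked quantitatively, and that is exactly what the paper does, just packaged as the construction of $\phi$ rather than as a contraction of $O_\ast$.
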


\begin{remark}
The proof constructs inductively a (natural) chain homotopy inverse $\phi\colon C_\ast^o(X)\to C_\ast(X)$ of the projection $\pr_\ast\colon C_\ast(X)\to C_\ast^o(X)$. Geometrically, the map $\phi$ takes a singular $p$-simplex $\sigma$ (defined up to permutation) and maps it to a linear combination of $(p+1)!$ singular $p$-simplices coming from the barycentric subdivision of $\sigma$. Each simplex in the barycentric subdivision is associated to a chain of faces of $\sigma$ and thus has a well defined order according to the dimension of the face each of its vertices came from. The coefficients of the linear combination are in $\{1,-1\}$; the sign depends on the order in which vertices are added to or excluded from the faces. To work out the details we benefit from a more abstract approach; see the proof below. For an explicit low-degree example of what $\phi$ does see~\cite{barr-paper}*{Section~5}.\end{remark}

\begin{proof}[Proofs of Theorems~\ref{thm: comparision non-oriented and oriented homology} and~\ref{thm: comparision of norms}]

For a topological space $X$ and $x\in X$ let $P_x(X)$ be the space of continuous paths $[0, 1]\rightarrow X$ starting from $x$ endowed with the compact-open topology. Let $G(X)$ be the topological sum of $P_x(X)$ running over all points $x$ in $X$. In a natural way, $X\mapsto G(X)$ becomes an endofunctor of the category of topological spaces. Evaluation at $1$ yields a natural transformation $\eval$ from $G$ to the identity functor. 

For any space $Y$ let $\epsilon\colon C_0(Y)\to\bZ$ denote the augmentation map sending each singular $0$-simplex to $1$. Note that $C_0^o(Y)=C_0(Y)$. We set $C_{-1}(Y)=C_{-1}^o(Y)=\bZ$. 

To make sense of the next statement we will think of $S(p+1)$ being embedded into $S(p+2)$ and consisting of those permutations of $\{0,\dots,p+1\}$ that fix $p+1$. According to~\cite{barr-paper}*{Proposition~4.1} there is a natural and $S(p+1)$-equivariant chain contraction 
\[ s_p\colon C_p(G(X))\to C_{p+1}(G(X)), ~p\ge -1, \]
of the (augmented) chain complex 
\[ \dots\to C_2(G(X))\to C_1(G(X))\to C_0(G(X))\xrightarrow{\epsilon}\bZ\to 0.\]
By connectedness, a singular $p$-simplex $\sigma\in\cS(G(X))$ lies in exactly one component $P_x(X)$ of $G(X)$. We say  
that $x$ is the base point of $\sigma$ and denote it by $b_\sigma=x$. We may view $\sigma$ as a continuous map $\Delta^p\times [0,1]\to X$. We use barycentric coordinates for the points of a standard simplex. The explicit formula for $s$ applied to a singular $p$-simplex in~\cite{barr-paper} is 
\[ s_p(\sigma)(t_0,\dots, t_{p+1}; u)=\begin{cases} 
       \sigma(\frac{t_0}{1-t_{p+1}}, \dots, \frac{t_p}{1-t_{p+1}}; (1-t_{p+1})u) & \text{ if $t_{p+1}\ne 1$,}\\ b_\sigma&\text{ otherwise,}\end{cases}
\]
provided $p>0$. There is a similar expression for $p=0$. Therefore we see that 
$s_p(\sigma)$ is a singular $(p+1)$-simplex for every singular $p$-simplex $\sigma$. 
By naturality we obtain that 
\begin{equation}\label{eq: 1-Lipschitzness of chain contraction}
\norm{s_p(c)}\le \norm{c}
\end{equation}
for every chain $c\in C_p(G(X))$. 

Further, for every $p\ge 0$ there is a natural and $S(p+1)$-equivariant homomorphism $\theta_p\colon C_p (X)\to C_p(G(X))$ 
such that $\theta_p$ followed by $C_p(\eval)$ is the identity~\cite{barr-paper}*{Proposition~4.2}. 
Note that we do \emph{not} claim that $\theta_\ast$ is a chain map. 
The explicit formula for $\theta_p(\sigma)$ where $\sigma\in\cS_p(X)$ is 
\[ 
\theta_p(\sigma)(t_0,\dots, t_p;u)=\sigma\Bigl( ut_0+\frac{1-u}{p+1},\dots,ut_p+\frac{1-u}{p+1}\Bigr).
\]
So 
the basepoint of $\theta_p(\sigma)$ is the barycenter of $\sigma$. 
We see that $\theta_p$ maps every singular $p$-simplex of $X$ to a singular $p$-simplex of $G(X)$. As above, this implies an estimate 
\begin{equation}\label{eq: 1-Lipschitzness of theta}
\norm{\theta_p(c)}\le \norm{c}
\end{equation}
for every chain $c\in C_p(X)$. For every $p\ge 0$ we have 
\[ s_p\bigl (O_p(G(X))\bigr)\subset O_{p+1}(G(X))\]
as an immediate consequence of equivariance. 
Hence $s_\ast$ descends to a natural chain contraction 
$s_\ast^o$ of 
\[ \dots\to C_2^o(G(X))\to C_1^o(G(X))\to C_0^o(G(X))\xrightarrow{\epsilon}\bZ\to 0.\]
Moreover, each $\theta_p$ descends to a natural homomorphism \[\theta_p^o\colon C_p^o(X)\to C_p^o(G(X))\] that is a right inverse of $C_p^o(\eval)$.  
The $1$-Lipschitz property in~\eqref{eq: 1-Lipschitzness of chain contraction} or~\eqref{eq: 1-Lipschitzness of theta} holds true for $s_p^o$ or $\theta_p^o$ as well. 

Next we construct a natural chain homomorphism $\phi\colon C_\ast^o(X)\to C_\ast(X)$ that is a chain homotopy inverse to the projection $\pr_\ast$ such that 
\[ \norm{\phi_p(c)}\le (p+1)!\cdot\ornorm{c}.\] 
This will finish the proof. Suppose that we have for any space $X$ and every $0\le i\le p$ 
natural chain homomorphisms $\phi_i\colon C_i(X)^o\to C_i(X)$ such that 
\begin{equation}\label{eq: ladder}
\begin{tikzcd}
 C_p^o(X)\arrow[r,"\partial^o_X"]\arrow[d, "\phi_p"] &  C_{p-1}^o(X)\arrow[r,"\partial^o_X"]\arrow[d, "\phi_{p-1}"] & \dots\arrow[r,"\partial^o_X"] & C_1^o(X)\arrow[r,"\partial^o_X"]\arrow[d, "\phi_1"] & C_0^o(X)\arrow[r]\arrow[d,"\phi_0"] & \bZ\arrow[d, "\mathrm{id}"]\\
 C_p(X)\arrow[r, "\partial_X"] & C_{p-1}(X)\arrow[r, "\partial_X"] &\dots\arrow[r, "\partial_X"] & C_1(X)\arrow[r, "\partial_X"] & C_0(X)\arrow[r] & \bZ
\end{tikzcd}
\end{equation}
commutes and such that $\phi_i$ is $(i+1)!$-Lipschitz for every $i\in\{0,\dots,p\}$. We define $\phi_{p+1}$ as the 
composition 
\begin{multline}\label{eq: definition of phi} C_{p+1}^o(X)\xrightarrow{\theta_{p+1}^o} C_{p+1}^o(G(X))\xrightarrow{\partial^o_{G(X)}} C_p^o(G(X))\xrightarrow{\phi_p} C_p(G(X))\\\xrightarrow{s_p} C_{p+1}(G(X))\xrightarrow{C_{p+1}(\eval)} C_{p+1}(X).
\end{multline}	
One verifies that $\phi_{p+1}$ adds another commutative square to~\eqref{eq: ladder} by 
the following computation: 
\begin{align*}
	\partial_X\circ\phi_{p+1} &= \partial_X\circ C_{p+1}(\eval)\circ s_p\circ \phi_p\circ\partial^o_{G(X)}\circ\theta^o_{p+1}\\
	&=C_p(\eval)\circ\partial_{G(X)}\circ s_p\circ\phi_p\circ \partial^o_{G(X)}\circ\theta_{p+1}^o\\
	&=C_p(\eval)\circ\bigl( \operatorname{id}-s_{p-1}\circ\partial_{G(X)}\bigr)\circ \phi_p\circ \partial^o_{G(X)}\circ \theta^o_{p+1}\\
	&=C_p(\eval)\circ\phi_p\circ\partial^o_{G(X)}\circ\theta^o_{p+1}-\underbrace{C_p(\eval)\circ s_{p-1}\circ \partial_{G(X)}\circ\phi_p\circ\partial^o_{G(X)}\circ\theta_{p+1}^o}_{=0\text{ since $\partial_{G(X)}\circ\phi_p=\phi_{p-1}\circ\partial^o_{G(X)}$}}	\\
	&=C_p(\eval)\circ\phi_p\circ\partial^o_{G(X)}\circ\theta^o_{p+1}\\
	&=\phi_p\circ C_p^o(\eval)\circ \partial^o_{G(X)}\circ\theta^o_{p+1}\\
	&=\phi_p\circ \partial_X^o\circ C_{p+1}^o(\eval)\circ\theta^o_{p+1}\\
	&=\phi_p\circ \partial_X^o.
\end{align*}

Further, since the boundary homomorphism $C_{p+1}^o(G(X))\to C_p^o(G(X))$ is $(p+2)$-Lipschitz and $\phi_p$ is $(p+1)!$-Lipschitz and the other homomorphisms appearing in~\eqref{eq: definition of phi} are $1$-Lipschitz the map $\phi_{p+1}$ is $(p+2)!$-Lipschitz. 

Finally, to show that there are natural chain homotopies for $\pr_\ast\circ \phi_\ast$ and $\phi_\ast\circ\pr_\ast$ is similar to constructing $\phi_\ast$. It is also a consequence 
of~\cite{barr-paper}*{Theorem~1.1}. 
\end{proof}
\section{Integral reduction lemma}\label{I orl}
In this section we prove the integral oriented version of the amenable reduction lemma \cite{alpert}*{Lemma 4, Corollary 5}, \cite{alpert+katz}*{Lemma~4}. It will allow us to estimate the integral oriented norm of cycles in terms of some conditions on the shape of the simplices composing them.

By~\cite{alpert+katz}*{Lemma~2} (which attributes it to~\cite{gromov}*{p.~48}) the singular chain complex of an aspherical space $Z$ admits a straightening operator 
\[\str_\ast: C_\ast(Z)\rightarrow C_\ast(Z),\]
i.e.~a chain homomorphism $\str_\ast$ such that 
\begin{enumerate}
	\item $\str_\ast$ is chain homotopic to the identity, 
	\item $\str_p$ is equivariant with regard to the action of the symmetric group $S(p+1)$ on $C_p(Z)$, and  
	\item if the singular $p$-simplices $\sigma$ and $\sigma'$ have the same sequence of vertices and their lifts to the universal cover have the same sequence of vertices then 
	$\str_p(\sigma)=\str_p(\sigma')$. 
\end{enumerate}

By the second property the straightening operator descends to a well-defined chain homomorphism 
\[\mathrm{str}_\ast^o: C_\ast^o(Z)\rightarrow C_\ast^o(Z).\]

\begin{remark}\label{rem:str o * -id}
For any cycle $c\in Z_p^o(Z)$, we have $[\mathrm{str}_p^o(c)]=[c]\in H_p(Z)$. Indeed, by Theorem~\ref{thm: comparision non-oriented and oriented homology} the map $\pr_\ast$ induces an isomorphism on homology. Therefore there exists $\tilde{c}\in Z_p(Z)$ such that $\pr_p([\tilde{c}])=[c]$. Denote by $h_\ast\colon C_\ast(Z)\rightarrow C_{\ast+1}(Z)$ the homotopy between $\mathrm{str}_\ast$ and the identity on $C_\ast(Z)$. Then
\begin{align*}
[\mathrm{str}_p^o(c)]-[c]=[\mathrm{str}_p^o\pr_p(\tilde{c})]-[\pr_p(\tilde{c})]&=[\pr_p\mathrm{str}_p(\tilde{c})]-[\pr_p(\tilde{c})]\\		&=[\pr_p(\mathrm{str}_p(\tilde{c})-\tilde{c})]\\
&=[\pr_p(\partial h_p (\tilde{c})+h_{p-1}\partial(\tilde{c}))]\\
	&=[\partial^o(\pr_p(h_p(\tilde{c}))]=0.
\end{align*}
\end{remark}
\begin{definition}\label{def: simplicial triple}
Let $X$ be a topological space. We call a triple $(\Sigma, c_{\Sigma}, \rho)$ consisting of a $p$-dimensional $\Delta$-complex $\Sigma$ which is a pseudomanifold, a simplicial fundamental cycle $c_\Sigma$ on $\Sigma$ and a continuous map $\rho\colon \Sigma\to X$ a \emph{simplicial triple} of $X$. 
For a singular $p$-chain $c$ on $X$ we say that a simplicial triple $(\Sigma, c_{\Sigma}, \rho)$ is a \emph{simplicial $c$-triple} or a \emph{simplicial triple representing $c$} if $\rho_\ast(c_\Sigma)=c$. 
\end{definition}

It is well known that singular cycles can always be represented by 
simplicial triples~\citelist{\cite{alpert}*{Section~2}\cite{hatcher}*{p.~108}}.

\begin{definition}
A \emph{partial coloring} of a $\Delta$-complex $\Sigma$ is a coloring of a subset of its vertices. The \emph{induced partial coloring} on edges of the $\Delta$-complex colors an edge with color $\ell$ if and only if both of its endpoints are colored with $\ell$. 
A simplicial triple is called \emph{partially colored} if its underlying $\Delta$-complex is endowed with a partial coloring. 
Given a partial coloring of the simplicial triple $(\Sigma, c_\Sigma, \rho)$ of $X$, we say that a simplex $\Delta$ of $\Sigma$ is \emph{non-essential} if either
\begin{enumerate}
\item $\Delta$ has two distinct vertices with the same color, or 
\item $\Delta$ has two vertices with the same $\rho$-image and the $\rho$-image of their  connecting edge is a homotopically trivial loop in~$X$. 
\end{enumerate}
If neither of these hold, $\Delta$ is called \emph{essential}. Finally, we call a singular simplex appearing 
in $\rho_\ast(c_\Sigma)$ \emph{essential} if it is the $\rho$-image of an essential simplex. 
\end{definition}

The following statement is the integral analogue of \cite{alpert}*{Lemma 4, Corollary 5} and of ~\cite{alpert+katz}*{Lemma~4}.

\begin{lemma}[Integral reduction lemma]\label{General ARL}
Let $\tilde{c}=\sum_\sigma a_\sigma\sigma\in C_p(X)$ be a $p$-cycle represented by a partially colored simplicial triple $(\Sigma, \tilde{c}_\Sigma, \rho)$. Let $\alpha\colon  X\rightarrow Z$ be a continuous map to an aspherical space $Z$ such that the  restriction of $\alpha\circ\rho\colon \Sigma\to Z$ to every component of every $1$-dimensional subcomplex given by edges of the same color induces the trivial map on fundamental groups. Let $c=\pr_p(\tilde c)\in C_p^o(X)$. 
Then we have 
\begin{equation}\label{eq: essential simplices}
\ornorm{\alpha_\ast[c]}\leq\sum_\text{$\sigma$ essential}|a_\sigma|.
\end{equation}
\end{lemma}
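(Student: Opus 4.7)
The plan is to adapt Alpert's amenable-reduction paradigm to the integral setting by replacing the averaging argument over amenable groups with a direct homotopy-theoretic construction made available by the stronger hypothesis of $\pi_1$-triviality on each monochromatic component. I will build an explicit representative of $\alpha_\ast[c]\in H_p(C_\ast^o(Z))$ whose integral oriented norm is bounded by the right-hand side of~\eqref{eq: essential simplices}, by combining a homotopy of $\alpha\circ\rho$ that collapses every monochromatic subcomplex with the straightening operator $\str_\ast^o$ on $Z$.

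Since $Z$ is aspherical, a map to $Z$ is null-homotopic if and only if it is trivial on $\pi_1$, so the hypothesis ensures that $\alpha\circ\rho|_C$ is null-homotopic for every component $C$ of every monochromatic $1$-dimensional subcomplex $K_\ell$. Applying the homotopy extension property to the CW-inclusion $\bigsqcup_\ell K_\ell\hookrightarrow\Sigma$ (the $K_\ell$ are pairwise disjoint subcomplexes, since vertices carry at most one color), I produce a map $F\colon\Sigma\to Z$ homotopic to $\alpha\circ\rho$ that is constant on each such component, say $F|_C\equiv z_C$. Then $F_\ast\tilde c_\Sigma\in C_p(Z)$ is homologous to $\alpha_\ast\tilde c$, so by Remark~\ref{rem:str o * -id} and Theorem~\ref{thm: comparision non-oriented and oriented homology}, the cycle
\[
\zeta := \pr_p\bigl(\str_p(F_\ast\tilde c_\Sigma)\bigr)\in C_p^o(Z)
\]
represents $\alpha_\ast[c]$.

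The heart of the argument is the claim that $\str_p(F\tau)\in O_p(Z)$ for every non-essential $\tau$ of $\Sigma$. If $\tau$ has two vertices $v,w$ of the same color $\ell$, then the edge $vw$ is colored $\ell$ by definition and lies in the unique component $C$ of $K_\ell$ through $v$ and $w$, so $F$ sends it to the constant loop at $z_C$; lifting $F\tau$ to the contractible universal cover $\tilde Z$ therefore produces equal lifted vertices at the $v$- and $w$-slots. By property~(3) of the straightening operator combined with its $S(p+1)$-equivariance, $\str_p(F\tau)$ is invariant under the transposition of those two slots, hence lies in $O_p(Z)$ via the second family of generators of $O_p$. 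If instead $\tau$ has $v,w$ with $\rho(v)=\rho(w)$ and $\rho(vw)$ a null-homotopic loop in $X$, then $\alpha\rho(vw)$ is a null-homotopic loop in $Z$, so its lift to $\tilde Z$ is closed; the same lifted-vertex-coincidence argument applies (after choosing the constants $z_C$ and the extending homotopy compatibly, or equivalently replacing $F$ by $\alpha\circ\rho$ on such simplices and absorbing the difference into a boundary).

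Choosing the simplicial triple so that $\rho$ is injective on simplices (which may be arranged by the standard construction of a simplicial triple from a singular cycle, giving $|b_\tau|=|a_{\rho\tau}|$ with essentiality matched when writing $\tilde c_\Sigma=\sum_\tau b_\tau\tau$), the vanishing claim reduces $\zeta$ to a sum over essential simplices only, each contributing (since $F_\ast$, $\str_p$, and $\pr_p$ are all $1$-Lipschitz and send a singular simplex to a single simplex) at most a single $p$-simplex of integral norm one. Hence
\[
\ornorm{\alpha_\ast[c]}\;\leq\;\norm{\sum_{\tau\text{ essential}} b_\tau\,\str_p(F\tau)}\;\leq\;\sum_{\tau\text{ essential}}|b_\tau|\;=\;\sum_{\sigma\text{ essential}}|a_\sigma|.
\]
I expect the main technical obstacle to be the simultaneous realization of both non-essentiality conditions within a single homotopy $F$: the constants $z_C$ must be chosen compatibly across distinct colored components linked by type-(2) pairs, so that $F$ sends the corresponding edges to null-homotopic loops in $Z$. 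This is resolved using the flexibility provided by the contractibility of $\tilde Z$, which allows each $z_C$ to be deformed to any chosen point of $Z$.
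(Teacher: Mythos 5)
Your plan is recognizably the same as the paper's: homotope to force non-essential simplices into a good position, straighten, then show each non-essential simplex lands in $O_p(Z)$ via the transposition property of the straightening operator, so that only essential simplices survive. There are, however, two concrete points where the proposal diverges from the paper and where a real gap opens up.

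First, the homotopy happens in the wrong place. You deform $\alpha\circ\rho$ in $Z$, collapsing each monochromatic component $C$ to a constant $z_C$. That handles type-(1) non-essentiality cleanly, but for a type-(2) pair $v,w$ (same $\rho$-image, $\rho(vw)$ null in $X$) with $v,w$ in different colored components, or one colored and one not, the images $F(v)=z_C$ and $F(w)=z_{C'}$ (or $\alpha\rho(w)$) need not coincide, so $F(vw)$ is not a loop and the lift-coincidence argument simply does not start. You flag this yourself, but the two proposed fixes are not arguments: ``choosing the constants $z_C$ compatibly'' can be over-constrained (one $C$ may be type-(2)-linked to several others by incompatible path classes), and ``replacing $F$ by $\alpha\circ\rho$ on such simplices and absorbing the difference into a boundary'' is not a well-defined operation on a single map, nor does adding a boundary help a pointwise norm estimate. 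The paper instead deforms $\rho$ in $X$, first collapsing each $\Sigma(\ell)$ via a spanning tree and then pushing every image vertex to a single point $x_0\in X$. After that reduction all pairs of vertices automatically satisfy the ``same image'' half of type-(2), and only null-homotopy of the edge remains to be checked; this is exactly what feeds into the transposition argument. Deforming in $X$ rather than in $Z$ is not a cosmetic choice — it is what makes the type-(2) case go through.

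Second, the final bookkeeping. You assume the simplicial triple can be chosen so that $\rho$ is injective on simplices and that $|b_\tau|=|a_{\rho\tau}|$. The triple $(\Sigma,\tilde c_\Sigma,\rho)$ is part of the lemma's hypothesis and carries the coloring; you are not free to replace it, and for $|a_\sigma|>1$ the standard construction necessarily has several $\Sigma$-simplices over $\sigma$. The paper sidesteps all of this by never introducing the coefficients $b_\tau$: it proves $\str_p\bigl(\alpha_\ast\sigma\bigr)\in O_p(Z)$ directly for every non-essential \emph{singular} simplex $\sigma$, so that $\str_p^o(\alpha_\ast c)=\sum_{\sigma\ \text{essential}}a_\sigma\,\pr_p\str_p(\alpha_\ast\sigma)$ and the norm bound is $\sum_{\sigma\ \text{essential}}|a_\sigma|$ with no further accounting. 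Rewriting your last step at the level of singular simplices, and replacing the $Z$-side homotopy by the paper's $X$-side collapse to a single basepoint, would close both gaps.
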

\begin{proof}
In the first part of the proof we reduce the statement to the special case 
where $\rho$ maps all vertices of $\Sigma$ to the same point $x_0\in X$. 

By introducing new colors for the connected components we may assume 
that each $1$-dimensional subcomplex of edges of the same color is connected. This does not change the subset of essential singular simplices.

We will homotope the map $\rho\colon \Sigma\rightarrow X$ to a map $\rho'\colon\Sigma\rightarrow X$ such that $\rho'$ maps all vertices of $\Sigma$ to the same point of $X$ and $\alpha\circ\rho'$ maps every colored edge to a nullhomotopic loop in $Z$. 

To this end, we pick for every color $\ell$ a spanning tree $T(\ell)$ of the $1$-dimensional subcomplex $\Sigma(\ell)\subset\Sigma$ of $\ell$-colored edges in $\Sigma$. Choose a vertex $v(\ell)$ in $T(\ell)$ for every color~$\ell$. We pick paths from every $\rho(v(\ell))$ and from the $\rho$-image of every uncolored vertex to $x_0$. These paths define a homotopy on the subset of these vertices to $X$ which we extend to a homotopy $G\colon\Sigma\times~[0,1]\to X$ using the fact that a subcomplex in a simplicial complex is a cofibration.

 Since $T(\ell)\subset \Sigma(\ell)$ is a contractible subset and an inclusion which is a cofibration, 
the projection $\pr(\ell)\colon\Sigma(\ell)\to \Sigma(\ell)/T(\ell)$ is a homotopy equivalence and thus has a homotopy inverse $j(\ell)$ which maps the basepoint given by $T(\ell)$ to $v(\ell)$. Again by the cofibration property we can extend the homotopy 
\[\Bigl(\coprod_\ell \Sigma(\ell)\coprod\{\text{uncolored vertices}\}\Bigr)\times [0,1]\to \Sigma(\ell)\hookrightarrow\Sigma\xrightarrow{\rho} X\] 
which is constant $\rho$ on the uncolored vertices and is a homotopy between $\rho$ and $\coprod \rho\circ j(\ell)\circ \pr(\ell)$ on $\Sigma(\ell)$ to a homotopy $H\colon \Sigma\times [0,1]\to X$ with 
$H_0=\rho$. Then the concatenation of the homotopies $H$ and $G$ is a homotopy from $\rho$ to a map $\rho'$ that maps all vertices to $x_0$. It is immediate from the construction that the restriction of $\rho'$ to $\Sigma(\ell)$ is still trivial on fundamental groups. The simplicial triple $(\Sigma, \tilde{c}_\Sigma, \rho')$ represents $c'=\rho'_\ast(\tilde{c}_\Sigma)$. The left and right hand sides of~\eqref{eq: essential simplices} are unchanged if we replace $c$ by $c'$. So we might as well assume now the original map $\rho$ maps all vertices of $\Sigma$ to the same point $x_0\in X$.

Consider the straightened cycle $\str_p(\alpha_\ast(\tilde{c}))\in C_p(Z)$. It is homologous to $\alpha_\ast\tilde{c}$ in $C_\ast(Z)$; by Remark \ref{rem:str o * -id} $\mathrm{str}_p^o(\alpha_\ast c)$ is homologous to $\alpha_\ast c$ in $C_\ast^o(Z)$. So we will rather estimate $\ornorm{\mathrm{str}^o_p(\alpha_\ast(c))}$. 

The result follows once we prove that for every non-essential singular simplex $\sigma$ in the linear combination $\tilde c=\sum_\sigma a_\sigma\sigma$ we have 
\begin{equation}\label{eq: non-essential vanishing}
\str_p^o\bigl(\alpha_\ast(\pr_p(\sigma))\bigr)=\pr_p\bigl(\str_p(\alpha_\ast\sigma)\bigr)=0. 
\end{equation}

Let $\sigma_{0}$ be a non-essential simplex of $\tilde{c}$. To show~\eqref{eq: non-essential vanishing} we distinguish two cases. 
\begin{enumerate}
\item $\sigma_0$ maps the distinct vertices $m,n\in\{0,\dots, p\}$ to the same point in $X$ 
with the loop of the corresponding edge being null-homotopic in~$X$, thus in $Z$:\\ Let $(n,m)\in S(p+1)$ be the transposition. Then 
\[\str_p(\alpha_\ast\sigma_{0})=\str_p((n,m)\alpha_\ast\sigma_{0})=(n, m)\str_p(\alpha_\ast\sigma_{0})\]
by the second and third property of the straightening operator. Hence $\str_p(\alpha_\ast\sigma_0)\in O_p(Z)$ and~\eqref{eq: non-essential vanishing} is implied. 
\item $\sigma_{0}$ has two distinct vertices of the same color:\\ 
Then the loop in $Z$ resulting from the edge connecting these vertices is null-homotopic by assumption, and we conclude $\str_p(\alpha_\ast\sigma_0)\in O_p(Z)$ as before. \qedhere
\end{enumerate}

\end{proof}

\section{Norms from the cellular structure of a Morse-Smale function}\label{norms morse-smale}

On a CW-complex we consider the smallest partial order $\le$ on the set of open cells such that 
$e\cap \bar e'\ne\emptyset$ for open cells $e$ and $e'$ implies $e\le e'$. Two open cells are \emph{incomparable} if they are incomparable with respect to $\le$.

\begin{definition}
We define four conditions on a cycle $c\in C_\ast(X)$ in a $CW$-complex $X$. Let $(\Sigma, c_\Sigma, \rho)$ be a simplical $c$-triple. 
\begin{enumerate}
\item The \emph{cellular} condition requires that for each simplex of $\Sigma$, the image of the interior of each face (of any dimension) must be contained in one cell of $X$.
\item The \emph{order} condition requires that the image of each simplex of $\Sigma$ must be contained in a totally ordered chain of cells; that is, the simplex does not intersect any two incomparable cells.
\item The \emph{internality} condition requires that for each simplex of $\Sigma$, if the boundary of a face (of any dimension) maps into a cell $e\subset X$, then the whole face maps into $e$.
\item The \emph{loop} condition requires that if a simplex in $\Sigma$ maps two vertices to the same point in $X$ then it maps the corresponding edge to that point. 
\end{enumerate}
It is easy to see that these conditions do not depend on the choice of simplicial triple. 
\end{definition}
\begin{definition}\label{def: essential cells}
Let $X$ be a $CW$-complex. Let $(\Sigma, c_\Sigma, \rho)$ be a simplicial triple representing a cycle $c=\sum a_\sigma\sigma\in C_p(X)$. Each open cell in $X$ is assigned its own color, and we color a  vertex $v$ of $\Sigma$ with the color of the open cell containing $\rho(v)$. We define 
the \emph{essential cellular norm} of $c$ by 
\[\|c\|_ {\mathbb{Z}, \mathrm{ess}}^{\mathrm{CW}}=\sum_{\sigma \mbox{ essential}}|a_\sigma|.\]
The \emph{essential cellular norm} of a (relative) homology class $h$ of $X$, which we also denote 
by~$\|h\|_ {\mathbb{Z}, \mathrm{ess}}^{\mathrm{CW}}$, is defined by taking the infimum of the essential cellular norms of chains that represent $h$ and satisfy the cellular, order, internality and loop conditions. 
\end{definition}

\begin{lemma}\label{CW-complex localisation}
Let $X$ be a finite $CW$-complex. Let $Z$ be any aspherical topological space and let $\alpha: X\rightarrow Z$ be a continuous map. For any homology class $h\in H_p(X)$, let $h_\mathrm{rel}$ denote the corresponding relative homology class in $H_p(X, X^{p-1})$. Then
$$\ornorm{\alpha_\ast h}\leq\|h_\mathrm{rel}\|_{\mathbb{Z}, \mathrm{ess}}^{\mathrm{CW}}.$$
\end{lemma}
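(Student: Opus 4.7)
The plan is to convert a near-optimal relative representative of $h_\mathrm{rel}$ into an absolute cycle representing $h$ without creating new essential simplices, and then apply the integral reduction lemma (Lemma~\ref{General ARL}) to that absolute cycle, with the coloring that assigns each vertex the open cell of $X$ containing its image.

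Fix $\epsilon>0$ and choose a relative cycle $c\in C_p(X)$ with $[c]=h_\mathrm{rel}$ in $H_p(X,X^{p-1})$ admitting a simplicial triple $(\Sigma,c_\Sigma,\rho)$ satisfying the cellular, order, internality, and loop conditions, with $\|c\|_{\bZ,\mathrm{ess}}^{\mathrm{CW}}\le \|h_\mathrm{rel}\|_{\bZ,\mathrm{ess}}^{\mathrm{CW}}+\epsilon$; here $\Sigma$ is a $p$-pseudomanifold and $\rho(\partial\Sigma)\subset X^{p-1}$. Because $X^{p-1}$ has dimension $p-1$, the map $H_p(X)\to H_p(X,X^{p-1})$ is injective and the connecting homomorphism of the pair yields $[\partial c]=0\in H_{p-1}(X^{p-1})$. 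Pick $b\in C_p(X^{p-1})$ with $\partial b=\partial c$, realized by a simplicial triple $(\Sigma_b,b_{\Sigma_b},\rho_b)$ inside $X^{p-1}$ that satisfies the four conditions and glues to $\Sigma$ along the boundary. Let $\Sigma'=\Sigma\cup_{\partial}\Sigma_b$ with the combined map $\rho'\colon\Sigma'\to X$; then $c':=\rho'_\ast(c_{\Sigma'})=c-b$ is an absolute $p$-cycle, and the injectivity above together with $[c']_\mathrm{rel}=h_\mathrm{rel}$ forces $[c']=h$ in $H_p(X)$.

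Partially color each vertex $v\in\Sigma'$ by the unique open cell of $X$ containing $\rho'(v)$. For any color $e$, the internality condition forces every edge of $\Sigma'$ with both endpoints colored $e$ to map via $\rho'$ into $e$. Since $e$ is an open cell it is contractible; hence $\alpha\circ\rho'$ restricted to any one-dimensional $e$-monochromatic subcomplex factors through a contractible space and is trivial on $\pi_1$. Applying Lemma~\ref{General ARL} to $c'$ with this coloring gives
\[
\ornorm{\alpha_\ast h}=\ornorm{\alpha_\ast[c']}\le\sum_{\sigma\text{ essential}}|a_\sigma|,
\]
where essentiality is in the sense of Definition~\ref{def: essential cells} (which, under the cell-coloring, coincides with the notion from Lemma~\ref{General ARL}, since distinct open cells of a CW-complex are disjoint).

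It remains to check that capping has not enlarged the essential count. Any $p$-simplex $\Delta\in\Sigma_b$ has all $p+1$ vertices in $X^{p-1}$, so by the order condition they lie in a totally ordered chain of at most $p$ cells (of dimensions $\le p-1$), forcing two vertices into a common cell; thus $\Delta$ is non-essential. Hence every essential singular simplex of $c'$ comes from an essential simplex of $\Sigma$. Moreover any such essential singular simplex avoids $X^{p-1}$ (one of its vertices maps to a $p$-cell), so its coefficient in $c'=c-b$ equals its coefficient in $c$. The right-hand side therefore equals $\|c\|_{\bZ,\mathrm{ess}}^{\mathrm{CW}}\le \|h_\mathrm{rel}\|_{\bZ,\mathrm{ess}}^{\mathrm{CW}}+\epsilon$; letting $\epsilon\to 0$ finishes the proof.

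The main technical obstacle is the capping step: producing a simplicial triple $(\Sigma_b,b_{\Sigma_b},\rho_b)$ inside $X^{p-1}$ whose boundary matches $-\partial\Sigma$ through $\rho$ and which satisfies the four conditions. Existence of some filling is guaranteed by the vanishing of $[\partial c]$ in $H_{p-1}(X^{p-1})$, but arranging internality and the loop condition on $\Sigma_b$ requires careful simplicial approximation and barycentric subdivision within the cells of $X^{p-1}$, and this is where the book-keeping is most delicate.
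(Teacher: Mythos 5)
Your overall plan — cap the relative cycle, color vertices by the open cell containing their $\rho$-image, invoke Lemma~\ref{General ARL}, and argue that the cap contributes no essential simplices — is the right shape of argument and is indeed the strategy of the paper (which follows Alpert's Lemma~3). Your bookkeeping is also sound where you did it: the injectivity of $H_p(X)\to H_p(X,X^{p-1})$ since $X^{p-1}$ is $(p-1)$-dimensional, the use of internality to force a same-colored edge into its open cell and hence to get $\pi_1$-triviality of $\alpha\circ\rho'$ on monochromatic subcomplexes, and the pigeonhole argument (via the order condition and the fact that the cell order is dimension-increasing) showing every $p$-simplex landing in $X^{p-1}$ has two vertices in a common cell.

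However, the proof has a genuine gap precisely where you flagged it, and it is not a minor technicality: you never produce the filling. You need a $p$-chain $b$ in $X^{p-1}$ with $\partial b=\partial c$ realized by a simplicial triple $(\Sigma_b,b_{\Sigma_b},\rho_b)$ that (i) satisfies the order and internality conditions, and (ii) glues $\Delta$-complex-theoretically onto $\partial\Sigma$, matching $\rho$ on the boundary. Homological vanishing of $[\partial c]$ in $H_{p-1}(X^{p-1})$ gives you \emph{some} filling, but it gives you no control over the cellular, order, internality and loop properties, and a generic filling will violate them. Worse, merely subdividing or applying simplicial approximation inside $X^{p-1}$ to fix conditions on $\Sigma_b$ will in general change it on the boundary and break the gluing with $\Sigma$. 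Since your non-essentiality argument for the cap relies on the order condition and your application of Lemma~\ref{General ARL} relies on internality, the entire argument is contingent on this unproved construction. This is the technical heart of the lemma: the paper (and Alpert's Lemma~3) replaces the single naive filling $c_A$ by a carefully built sum $c_\delta+c_1+c_2$ together with a tailored partial coloring, engineered exactly so that these chains introduce no essential simplices while preserving essentiality on $c_{\mathrm{rel}}$. Until you supply a construction with analogous guarantees, the proof is incomplete.
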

\begin{proof}
The statement is the analog of Lemma~3 in Alpert's paper~\cite{alpert} when one replaces the simplicial norm of $\alpha_\ast h$ by its integral oriented norm and considers the (generalized) stratification of $X$ by open cells. We follow Alpert's proof closely and just point out the necessary modifications. Let $A=X^{p-1}$. Let $c_\mathrm{rel}$ be a relative cycle representing $h_\mathrm{rel}$ that satisfies the cellular, order, internality and loop conditions. By adding an integral singular chain $c_A$ in $A$ we
obtain a cycle $c_\mathrm{rel}+c_A\in C_p(X)$. From the chains $c_\mathrm{rel}$ and $c_A$ Alpert 
produces chains $c_\delta, c_1,c_2\in C_p(X)$ with $c=c_\mathrm{rel}+c_\delta+c_1+c_2\in C_p(X)$ being a cycle and a partial coloring of a simplicial $c$-triple $(\Sigma, c_\Sigma, \rho)$ 
such that 
\begin{itemize}
\item no singular simplex appearing in $c_\delta, c_1$ or $c_2$ is essential, and 
\item every essential singular simplex in $c_\mathrm{rel}$ is also essential with regard to the coloring in Definition~\ref{def: essential cells}. 
\end{itemize}
Having thus produced a representing cycle for $h$ that satisfies the conditions required in Lemma~\ref{General ARL}, we apply this result.
\end{proof}

In the following let $M$ be a closed Riemannian $d$-manifold. Let $f\colon M\to \bR$ be a Morse-Smale function that is, in addition, Euclidean. The latter is a technical assumption; see~\cite{quin}*{Definition~2.16} and~\cite{alpert}*{Section~3} for more details on this notion. If $f\colon M\to\bR$ is Morse-Smale and Euclidean, then the set of descending manifolds $\{\cD(p)\mid \text{ $p\in M$ critical}\}$ is a 
CW-decomposition of~$M$~\citelist{\cite{alpert}*{Lemma~11}\cite{quin}*{Section~3.4}}. 

\begin{definition}
Let $f\colon M\to\bR$ be a Euclidean Morse-Smale function on a closed Riemannian $d$-manifold. 
The essential cellular norm of a homology class $h\in H_p(M)$ with regard to the CW-structure of descending manifolds is denoted by $\|h\|_f$. 
\end{definition}

The following lemma is the crucial point in which the number of maximally broken trajectories is related to the essential cellular norm of the descending manifolds $\mathcal{D}(p)$ of the Morse function. It is proved in detail in~\cite{alpert}.

\begin{lemma}[\cite{alpert}*{Lemma 11}]\label{count n-broken traj}
Let $M$ be a closed Riemannian $d$-manifold, and let $f\colon M\rightarrow \mathbb{R}$ be a Euclidean Morse-Smale function. For each descending manifold $\mathcal{D}(p)$ of dimension $d$, let $[\mathcal{D}(p)]\in H_d(M, M^{d-1})$ be the corresonding relative homology class. Then the number of maximally broken trajectories of $f$ starting at the critical point $p$ is at least $\|[\mathcal{D}(p)]\|_f$. 
\end{lemma}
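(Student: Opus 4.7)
The plan is to exhibit an explicit relative $d$-cycle $c \in C_d(M, M^{d-1})$ representing $[\cD(p)]$ whose number of essential $d$-simplices is at most the number of maximally broken trajectories of $f$ starting at $p$; taking the infimum over representatives then yields the desired upper bound on $\|[\cD(p)]\|_f$.

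First I would construct the underlying $\Delta$-complex $\Sigma$ together with the reference map $\rho$. Under the Euclidean Morse-Smale hypothesis, the descending manifolds form a CW-decomposition of $M$, and the closure $\overline{\cD(p)}$ of the top-dimensional descending cell is filtered by lower-dimensional descending cells along broken trajectories. For each maximally broken trajectory $\gamma = (p = x_d, x_{d-1}, \dots, x_0)$ starting at $p$, I take an abstract $d$-simplex $\Delta_\gamma$ with ordered vertices $v_0^\gamma, \dots, v_d^\gamma$, and glue the $\Delta_\gamma$ along faces corresponding to shared sub-trajectories to form $\Sigma$. The map $\rho\colon \Sigma \to M$ sends $v_i^\gamma \mapsto x_i$ and extends to the interior of each $\Delta_\gamma$ by a canonical straightening along the flow lines $\gamma_d, \dots, \gamma_1$, with image in the chain $\overline{\cD(x_0)} \subset \overline{\cD(x_1)} \subset \dots \subset \overline{\cD(x_d)} = \overline{\cD(p)}$. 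A compatible orientation of $\cD(p)$ determines signs turning this into a fundamental cycle $c_\Sigma$, and a standard check (generic bijectivity of $\rho$ onto $\cD(p)$) shows $\rho_\ast(c_\Sigma) = [\cD(p)]$ in $H_d(M, M^{d-1})$.

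Next I would verify the four conditions from Definition~\ref{def: essential cells}. The \emph{cellular} and \emph{internality} conditions follow because every face of $\Delta_\gamma$ spanned by $v_{i_0}^\gamma, \dots, v_{i_k}^\gamma$ with $i_0 < \dots < i_k$ has interior mapping into the single open cell $\cD(x_{i_k})$ and boundary mapping into the lower strata. The \emph{order} condition holds because the cells $\cD(x_0) \leq \cD(x_1) \leq \dots \leq \cD(x_d)$ form a totally ordered chain in the CW partial order. The \emph{loop} condition is automatic: vertices of a single $\Delta_\gamma$ map to distinct critical points, and identifications under gluing only occur when two vertices represent literally the same critical point, in which case the edge between them collapses to that point. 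With the vertex coloring of Definition~\ref{def: essential cells}, each $\Delta_\gamma$ has $d+1$ vertices mapping to $d+1$ distinct critical points (one of each index $0, \dots, d$) lying in $d+1$ distinct open cells, so it receives $d+1$ distinct colors and is essential. Thus the essential $d$-simplices are in bijection with maximally broken trajectories from $p$.

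The main obstacle is the first step: ensuring that $\rho$ can be defined globally so that $\rho_\ast(c_\Sigma) = [\cD(p)]$ in $H_d(M, M^{d-1})$. The Euclidean hypothesis is essential here, since it guarantees that $\overline{\cD(p)}$ carries a compatible cell structure whose attaching maps factor through broken trajectories; this in turn allows the canonical straightening on each $\Delta_\gamma$ and ensures that the union of the simplex images covers $\cD(p)$ exactly once (up to a measure-zero locus accounted for by cancellation on shared boundary faces). Once this is in place, the remaining steps are formal, and the estimate on $\|[\cD(p)]\|_f$ follows at once.
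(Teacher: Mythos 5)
The paper does not prove this lemma itself; it only cites Alpert's Lemma~11 and refers the reader to her paper for the argument, so there is no in-text proof to compare against. Your sketch is in the same spirit as Alpert's (build a simplicial triple dominated by the corner structure of $\overline{\cD(p)}$), but as written it has gaps that I don't think are cosmetic.

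First, the gluing does not produce a pseudomanifold. If you identify faces of $\Delta_\gamma$ and $\Delta_{\gamma'}$ only when the corresponding sub-trajectories (sequence of critical points \emph{and} the flow lines between them) literally coincide, then the codimension-one face of $\Delta_\gamma$ obtained by omitting an interior vertex $v_i$ ($0<i<d$) is shared with no other $\Delta_{\gamma'}$: matching the remaining vertices forces $x_{i-1}$, $x_{i+1}$ and all other flow lines to agree, and the data determining $v_{i-1}^\gamma$ already pins down $x_i,\gamma_i,\gamma_{i+1}$, so $\gamma'=\gamma$. Thus every such lateral face is a free face of $\Sigma$, and $\partial c_\Sigma$ has uncancelled terms there. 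Unless $\rho$ sends all these free faces into $M^{d-1}$ (which in general it cannot, if $\rho$ is to be degree-one onto $\cD(p)$, since those faces must then sweep out interior pieces of $\overline{\cD(p)}$), $\rho_\ast(c_\Sigma)$ is not a relative cycle. If you instead glue more aggressively, the face omitting $v_0$ is shared by as many $\Delta_\gamma$'s as there are flow lines out of $x_1$, which can exceed two, again breaking the pseudomanifold condition. So the combinatorics of $\Sigma$ need to be specified much more carefully than ``glue along shared sub-trajectories.''

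Second, the proposed extension of $\rho$ to the interiors does not satisfy the cellular condition. If the edge $[v_i^\gamma,v_j^\gamma]$ ($i<j$) is filled in by ``straightening along the flow lines,'' its interior passes through the intermediate critical points $x_{j-1},\dots,x_{i+1}$ and through the cells $\cD(x_{j-1}),\dots,\cD(x_{i+1})$, not just $\cD(x_j)$, so the interior of a face is not contained in one open cell. A valid $\rho$ must instead push the interior of that edge ``diagonally'' across the single cell $\cD(x_j)$; the existence of such a $\rho$ compatible with the stratification is exactly the nontrivial input from Qin's Euclidean Morse--Smale structure that Alpert uses, and it is not supplied here.

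Third, and most importantly, the assertion $\rho_\ast(c_\Sigma)=[\cD(p)]$ in $H_d(M,M^{d-1})$ is not established. You acknowledge it as ``the main obstacle'' and wave at ``generic bijectivity,'' but with one $d$-simplex per corner and all $d$-simplices distinct and essential, one needs a precise degree-one map onto $\overline{\cD(p)}$ whose overlaps cancel correctly; nothing in the sketch guarantees this, and the two issues above indicate that the naive construction does not achieve it. This is precisely the content of Alpert's Lemma~11 (together with her Lemmas~12--13), where the compactified descending manifold and its boundary attaching maps are analysed in detail to produce the required cycle. Until that construction is carried out, the proposal has not proved the statement.
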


\section{Proof of the main theorem}\label{sec: conclusion of proof}

\begin{theorem}\label{thm: alperts main thm integrally}
Let $M$ be a closed oriented Riemannian $d$-manifold. Let $f\colon M\rightarrow \mathbb{R}$ be a Morse-Smale function. Let $Z$ be any aspherical topological space, and let $\alpha\colon M\rightarrow Z$ be a continuous map. Then the number of maximally broken 
trajectories of $f$ is at least 
$\ornorm{\alpha_\ast[M]}$.
\end{theorem}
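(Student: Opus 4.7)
The plan is to follow Alpert's strategy, replacing the simplicial norm with the integral oriented norm throughout and invoking the integral reduction lemma (Lemma~\ref{General ARL}) in place of its real analogue. The main ingredients are already in place: Lemma~\ref{CW-complex localisation} relates $\ornorm{\alpha_\ast h}$ to the essential cellular norm of the relative homology class $h_\mathrm{rel}\in H_p(X,X^{p-1})$, and Lemma~\ref{count n-broken traj} bounds the essential cellular norm $\|[\cD(p)]\|_f$ of a top-dimensional descending cell by the number of maximally broken trajectories starting at the index-$d$ critical point $p$. The task is to combine these with a decomposition of the fundamental class along the CW-structure of descending manifolds.

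First I would reduce to the case that $f$ is a \emph{Euclidean} Morse-Smale function, as defined in~\cite{quin}*{Definition~2.16} and used in~\cite{alpert}*{Section~3}. By Alpert's perturbation argument we may replace $f$ by an arbitrarily close Euclidean Morse-Smale function $f'$ without increasing the number of maximally broken trajectories, and under this assumption the descending manifolds $\{\cD(p)\}$ form a CW-decomposition of $M$. The top-dimensional open cells are exactly the descending manifolds $\cD(p)$ of the critical points $p$ of index $d$, and the cellular chain complex $C_\ast^{\mathrm{CW}}(M)$ coincides with the Morse-Smale-Witten complex.

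Next I would write the fundamental class in this CW-decomposition. The image of $[M]\in H_d(M;\bZ)$ in $H_d(M,M^{d-1};\bZ)$ is represented by the cellular chain
\[
[M]_\mathrm{rel}=\sum_{p\colon \mathrm{ind}(p)=d}\eps_p\,[\cD(p)]
\]
with signs $\eps_p\in\{\pm 1\}$ determined by orientations. Applying Lemma~\ref{CW-complex localisation} to $h=[M]$ gives
\[
\ornorm{\alpha_\ast[M]}\le \|[M]_\mathrm{rel}\|_{\bZ,\mathrm{ess}}^{\mathrm{CW}}.
\]
The essential cellular norm is subadditive with respect to sums of relative cycles satisfying the cellular, order, internality, and loop conditions (one concatenates simplicial triples side by side, and the sets of essential simplices in the summands inject into the essential simplices of the sum), so
\[
\|[M]_\mathrm{rel}\|_{\bZ,\mathrm{ess}}^{\mathrm{CW}}\le \sum_{p\colon \mathrm{ind}(p)=d}\|[\cD(p)]\|_f.
\]
Finally, Lemma~\ref{count n-broken traj} bounds each summand by the number of maximally broken trajectories starting at $p$, and summing over all index-$d$ critical points yields the total count of maximally broken trajectories of $f$, finishing the proof.

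The step that requires the most care is verifying that Lemma~\ref{CW-complex localisation} applies in the form needed: one must check that the relative cycle $[M]_\mathrm{rel}$, together with a singular chain $c_A$ in $M^{d-1}$ absorbing its boundary, can be arranged into a simplicial triple satisfying the cellular, order, internality, and loop conditions, and that the partial coloring by cells makes the essential simplices coincide (up to the harmless modifications $c_\delta,c_1,c_2$ in the proof of Lemma~\ref{CW-complex localisation}) with the ones counted by $\|[\cD(p)]\|_f$. All of this is carried out in Alpert's setup, but one has to double check that the arguments are insensitive to working over $\bZ$ rather than $\bR$; the only substantive change is the use of the integral reduction lemma at the end, which we have already established.
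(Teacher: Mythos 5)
Your proof is correct and follows essentially the same route as the paper: reduce to a Euclidean Morse-Smale function, decompose $[M]_\mathrm{rel}$ into the top-dimensional descending cells, apply the triangle inequality for the essential cellular norm, and then invoke Lemma~\ref{count n-broken traj} and Lemma~\ref{CW-complex localisation}. The extra care you flag at the end (checking that Alpert's simplicial-triple constructions carry over unchanged to $\bZ$, with the integral reduction lemma substituted at the final step) is exactly the content that the paper delegates to the proof of Lemma~\ref{CW-complex localisation}.
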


\begin{proof}
By Theorem~10 and its subsequent remark in~\cite{alpert} we may and will assume that $f$ is in addition Euclidean. 

 Consider the $CW$-structure on $M$ given by the descending manifolds of~$f$.
The relative class $[M]_\mathrm{rel}\in H_d(M, M^{d-1})$ corresponding to $[M]$ is equal to the sum of $d$-cells
\[[M]_\mathrm{rel}=\sum_{\mathrm{ind}(p)=d}[\mathcal{D}(p)].\]
So by triangle inequality and Lemma~\ref{count n-broken traj} we have
\begin{align*}
\|[M]_\mathrm{rel}\|_f&\leq\sum_{\mathrm{ind}(p)=d}\|[\mathcal{D}(p)]\|_f\\&
\leq|\left\{\text{maximally broken trajectories}\right\}|.
\end{align*}
Applying Lemma \ref{CW-complex localisation}, we obtain
\[\ornorm{\alpha_\ast[M]}\leq\|[M]_\mathrm{rel}\|_f.\qedhere\]
\end{proof}

\begin{proof}[Proof of Theorem~\ref{thm: main theorem}]
Let $M$ be a closed orientable aspherical Riemannian $d$-manifold with a Morse-Smale function $f\colon M\to\bR$. We apply Theorem~\ref{thm: alperts main thm integrally} with $Z=M$ and $\alpha$ being the identity map. Hence the number of maximally broken trajectories is at least $\ornorm{[M]}$, where $[M]$ denotes the fundamental class. 
By Theorem~\ref{thm: comparision of norms} we have 
\[ \norm{[M]}\le (d+1)!\cdot\ornorm{[M]}.\]
By Theorem~\ref{thm: homology bounds} we obtain that 
\begin{align*} \log|\tors(H_p(M;\bZ))|&\le \log(d+1)\binom{d+1}{p+1}\norm{[M]},\\
 \rank(H_p(M;\bZ))&\le\norm{[M]},
\end{align*}
for every $p\in\{0,\dots, d\}$. 
Now we can conclude the theorem from
\begin{align*}
\sum_{p=0}^d\size\bigl(H_p(M;\bZ)\bigr)&\le \sum_{p=0}^d \Bigl( \log(d+1)\binom{d+1}{p+1}+1\Bigr)\norm{[M]}\\
&=\Bigl(\log(d+1)\sum_{p=0}^d \binom{d+1}{p+1}+ (d+1)\Bigr)\norm{[M]}\\
&=\Bigl(\log(d+1)\bigl(\Bigl(\sum_{p=0}^{d+1} \binom{d+1}{p}\Bigr)-1\bigr)+ (d+1)\Bigr)\norm{[M]}\\
&\le\Bigl((d+1)\bigl(2^{d+1}-1\bigr)+ (d+1)\Bigr)\norm{[M]}\\
&=2^{d+1}(d+1)\norm{[M]}\\
&\le 2^{d+1}(d+1)(d+1)!\cdot\ornorm{[M]}.\qedhere
\end{align*}
\end{proof}

\begin{bibdiv}
\begin{biblist}

\bib{alpert}{article}{
   author={Alpert, Hannah},
   title={Using simplicial volume to count maximally broken Morse
   trajectories},
   journal={Geom. Topol.},
   volume={20},
   date={2016},
   number={5},
   pages={2997--3018},
}
\bib{alpert+katz}{article}{
   author={Alpert, Hannah},
   author={Katz, Gabriel},
   title={Using simplicial volume to count multi-tangent trajectories of
   traversing vector fields},
   journal={Geom. Dedicata},
   volume={180},
   date={2016},
   pages={323--338},
}

\bib{BGS}{article}{
   author={Bader, Uri},
   author={Gelander, Tsachik},
   author={Sauer, Roman},
   title={Homology and homotopy complexity in negative curvature},
   date={2017},
   note={arXiv:1612.04871; to appear in JEMS},
}

\bib{morse-book}{book}{
   author={Banyaga, Augustin},
   author={Hurtubise, David},
   title={Lectures on Morse homology},
   series={Kluwer Texts in the Mathematical Sciences},
   volume={29},
   publisher={Kluwer Academic Publishers Group, Dordrecht},
   date={2004},
   pages={x+324},
}
	
\bib{barr-paper}{article}{
   author={Barr, Michael},
   title={Oriented singular homology},
   journal={Theory Appl. Categ.},
   volume={1},
   date={1995},
   pages={No.\ 1, 1--9},
}

\bib{frigerioetal}{article}{
   author={Frigerio, Roberto},
   author={L\"{o}h, Clara},
   author={Pagliantini, Cristina},
   author={Sauer, Roman},
   title={Integral foliated simplicial volume of aspherical manifolds},
   journal={Israel J. Math.},
   volume={216},
   date={2016},
   number={2},
   pages={707--751},
}

\bib{gromov}{article}{
   author={Gromov, Michael},
   title={Volume and bounded cohomology},
   journal={Inst. Hautes \'Etudes Sci. Publ. Math.},
   number={56},
   date={1982},
   pages={5--99 (1983)},
}
\bib{hatcher}{book}{
   author={Hatcher, Allen},
   title={Algebraic topology},
   publisher={Cambridge University Press, Cambridge},
   date={2002},
   pages={xii+544},
}

\bib{luck}{book}{
	author={Lück, Wolfgang},
	title={$L^2$-Invariants: Theory and Applications to Geometry and $K$-Theory},
	series={Ergebnisse der Mathematik und ihrer Grenzgebiete, 3. Folge},
	publisher={Springer, Berlin},
	date={2002},
}
\bib{quin}{article}{
   author={Qin, Lizhen},
   title={On moduli spaces and CW structures arising from Morse theory on
   Hilbert manifolds},
   journal={J. Topol. Anal.},
   volume={2},
   date={2010},
   number={4},
   pages={469--526},
}
\bib{sauer}{article}{
   author={Sauer, Roman},
   title={Volume and homology growth of aspherical manifolds},
   journal={Geom. Topol.},
   volume={20},
   date={2016},
   number={2},
   pages={1035--1059},
}		
\end{biblist}
\end{bibdiv}

\end{document}